\documentclass[12pt,reqno]{amsart}
\usepackage{amsmath,amssymb,amsfonts,amsthm,cite}

\newtheorem{thm}{Theorem}[section]
\newtheorem{lem}[thm]{Lemma}
\newtheorem{cor}[thm]{Corollary}
\theoremstyle{remark}
\newtheorem{rem}[thm]{Remark}

\numberwithin{equation}{section}

\begin{document}
	\allowdisplaybreaks
	\title
	{Parameterized partial theta identities and a unified $q$-difference proof}
	
	\author[C.-Y. Su]{Chen-Yang Su}
	\address[C.-Y. Su]{College of Mathematical Science, Tianjin Normal University, Tianjin 300387, P.R. China}
	\email{suchenyang@tjnu.edu.cn}
	
	\keywords{theta functions, partial theta functions, basic hypergeometric series, conjugate Bailey pairs}
	\subjclass[2020]{33D15, 33D90}
	\date{}
	\begin{abstract}
		We establish five families of integer-parameter extensions of Ramanujan's partial theta identities. The families follow from a common two-parameter specialization of Andrews' transformation, for which we give an independent proof based on a $q$-difference recurrence and a boundary estimate. Specializations of the integer parameter recover six identities from Ramanujan's lost notebook. As an application, a residue argument applied to the fifth family gives an integer-parameter extension of Lovejoy's residual identity, from which we construct a corresponding family of conjugate Bailey pairs.
	\end{abstract}
	
	\maketitle
	\section{Introduction}
	To state the main results, we introduce the standard notation of \cite{Gasper-Rahman-2004}. Let $q$ be a complex number with $0<|q|<1$. For a nonnegative integer $n$,
	\[
	(a;q)_{0}:=1, \quad (a;q)_n:=\prod_{k=0}^{n-1}(1-aq^{k}), \quad (a;q)_{\infty}:=\prod_{k=0}^{\infty}(1-aq^{k}),
	\]
	and
	\[
	(a;q)_{-n}:=\frac{1}{(aq^{-n};q)_n}.
	\]
	For $r\geq1$, we also write
	\[
	(a_1,a_2,\ldots, a_r;q)_n:=(a_1;q)_n(a_2;q)_n\cdots(a_r;q)_n,
	\]
	where $n$ may be an integer or $\infty$ whenever the expression is defined.
	
	A partial theta function is a unilateral series of the form
	\[\sum^{\infty}_{n=0}z^nq^{An^2+Bn},\]
	obtained by restricting the summation in a theta series to nonnegative indices. In \cite[p.~37]{Ramanujan-1988}, Ramanujan recorded several partial theta function identities without proofs. For example, for $x \neq 0$,
	\begingroup
	\setlength{\jot}{2pt}
	\setlength{\abovedisplayskip}{6pt}
	\setlength{\belowdisplayskip}{6pt}
	\setlength{\abovedisplayshortskip}{4pt}
	\setlength{\belowdisplayshortskip}{4pt}
	\begin{align}
		\sum^{\infty}_{n=0}\frac{q^n}{(xq,x^{-1}q;q)_n}
		&=(1-x)\sum_{n=0}^{\infty}(-1)^nx^{3n}q^{n(3n+1)/2}
		(1-x^2q^{2n+1})\nonumber\\*
		&\quad +\frac{x}{(xq,x^{-1}q;q)_{\infty}}
		\sum_{n=0}^{\infty}(-1)^nx^{2n}q^{\binom{n+1}{2}},\label{p-1}\\[5pt]
		\sum^{\infty}_{n=0}\frac{q^{2n+1}}{(-xq,-x^{-1}q;q^2)_{n+1}}
		&=\sum_{n=0}^{\infty}x^{3n+1}q^{n(3n+2)}(1-xq^{2n+1})\nonumber\\
		&\quad -\frac{1}{(-xq,-x^{-1}q;q^2)_{\infty}}
		\sum_{n=0}^{\infty}(-1)^nx^{2n+1}q^{n(n+1)}.\label{p-2}
	\end{align}
	\begin{align}
		\sum^{\infty}_{n=0}\frac{(q;q^2)_nq^{2n}}{(-xq^2,-x^{-1}q^2;q^2)_{n}}
		&=(1+x)\sum_{n=0}^{\infty}(-1)^nx^{n}q^{n(n+1)/2}\nonumber\\
		&\quad -\frac{x(q;q^2)_{\infty}}
		{(-xq^2,-x^{-1}q^2;q^2)_{\infty}}
		\sum_{n=0}^{\infty}x^{3n}q^{3n^2+2n}
		(1-xq^{2n+1}).\label{p-6}
	\end{align}
	\begin{multline}
		\left(1+\frac{1}{x}\right)
		\sum^{\infty}_{n=0}\frac{(q;q^2)_nq^{2n+1}}
		{(-xq,-x^{-1}q;q^2)_{n+1}}
		=\sum_{n=0}^{\infty}(-1)^nx^{n}q^{n(n+1)/2}\\*
		-\frac{(q;q^2)_{\infty}}
		{(-xq,-x^{-1}q;q^2)_{\infty}}
		\sum_{n=0}^{\infty}x^{3n}q^{3n^2+n}
		(1-x^2q^{4n+2}).\label{p-3}
	\end{multline}
	\begin{align}
		\sum^{\infty}_{n=0}\frac{(q;q^2)_nq^{n}}{(-xq,-x^{-1}q;q)_{n}}
		&=(1+x)\sum_{n=0}^{\infty}(-x)^nq^{n(n+1)/2}\nonumber\\
		&\quad -\frac{x(q;q^2)_{\infty}}
		{(-xq,-x^{-1}q;q)_{\infty}}
		\sum_{n=0}^{\infty}(-1)^nx^{2n}q^{n^2+n},\label{p-4}\\[5pt]
		\left(1+\frac{1}{x}\right)\sum^{\infty}_{n=0}\frac{(-q;q)_{2n}q^{2n+1}}{(xq,x^{-1}q;q^2)_{n+1}}
		&=-\sum_{n=0}^{\infty}(-1)^nx^{n}q^{n(n+1)}\nonumber\\
		&\quad +\frac{(-q;q)_{\infty}}
		{(xq,x^{-1}q;q^2)_{\infty}}
		\sum_{n=0}^{\infty}(-1)^{n}x^{n}q^{(n^2+n)/2}.\label{p-5}
	\end{align}
	\endgroup
	In 1981, Andrews \cite{Andrews-1981} systematically studied Ramanujan's partial theta function identities and proved them using $q$-summation and transformation formulas. Subsequently, Agarwal \cite{Agarwal-1984} related Andrews' transformation to a ${}_3\phi_2$ identity due to Sears \cite{Sears-1951}.
	
	Further extensions were obtained using Bailey pairs, products of partial theta functions, and transformation methods; see, for example, \cite{Warnaar-2003,Andrews-Warnaar-2007,Berkovich-2020,Ma-2012,Wang-Ma-2018,Sun-2020}. Residual identities and their partition-theoretic consequences were studied in \cite{Lovejoy-2012,Kim-Lovejoy-2015,Kim-Lovejoy-2018}, while combinatorial proofs of related identities were given in \cite{Alladi-2009,Alladi-2010,Berndt-Kim-Yee-2010,Kim-2010}.
	
	We study integer-parameter extensions of these six identities through a common specialization of Andrews' transformation, stated in Lemma~\ref{unified-identity}. We give this identity an independent proof by showing that both sides satisfy a common $q$-difference recurrence and controlling their iterated difference by a boundary estimate. Introducing the integer parameter $m$ produces shifted infinite products together with finite correction terms, and suitable choices of $B$ and $C$ yield Theorems~\ref{partial-thm1}--\ref{partial-thm5}, which recover the six Ramanujan identities \eqref{p-1}--\eqref{p-5} as special cases. Following Lovejoy's residue approach \cite{Lovejoy-2012}, we derive an integer-parameter extension of his residual identity from Theorem~\ref{partial-thm5} and use it to construct an explicit family of conjugate Bailey pairs.
	
	The main results are stated below. Throughout the paper, $m$ is an integer in the range specified in each result, and $\sum_{n=0}^{k}=0$ when $k<0$. All parameters are assumed to be chosen so that no displayed denominator vanishes.
	To streamline the statements, set
	\[
	D_{r,n}(x):=(-xq^r,-x^{-1}q^r;q^2)_n,
	\qquad D_r(x):=D_{r,\infty}(x).
	\]
	
	\begingroup
	\setlength{\jot}{0pt}
	\begin{thm}\label{partial-thm1} For $m\geq 0$ and $x\neq0$,
		\begin{align*}
			&\sum_{n=0}^{\infty}\frac{q^{2n+m}}{D_{m+2,n}(x)}\\
			&=q^m(1+xq^m)\sum_{n=0}^{m-1}
			(-xq^{-m};q^2)_{n+1}(-xq^m)^n+
			\frac{1}{D_{m+2}(x)}
			\sum_{n=0}^{\infty}(-1)^{n+1}x^{2n+1}q^{n^2+n}\\
			&\quad+(-1)^mq^{m^2+m}(-xq^{-m};q^2)_{m+1}\sum_{n=0}^{\infty}x^{3n+m}q^{3n^2+(3m+1)n}
			(1-x^2q^{4n+2m+2}).
		\end{align*}
	\end{thm}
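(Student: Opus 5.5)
The plan is to induct on $m$ in steps of two, with a recurrence that comes from peeling off the first factor of $D_{m,n}$. Write
\[
F_m(x):=\sum_{n=0}^{\infty}\frac{q^{2n}}{D_{m,n}(x)} ,
\]
so that the left-hand side of Theorem~\ref{partial-thm1} is $q^mF_{m+2}(x)$. Because $D_{m,n+1}(x)=(1+xq^m)(1+x^{-1}q^m)D_{m+2,n}(x)$, separating the $n=0$ term of $F_m$ gives
\[
F_m(x)=1+\frac{q^2}{(1+xq^m)(1+x^{-1}q^m)}\,F_{m+2}(x).
\]
Equivalently,
\[
q^{m}F_{m+2}(x)=q^{m-2}(1+xq^m)(1+x^{-1}q^m)\bigl(F_m(x)-1\bigr).
\]
So the statement for $m$ follows from the statement for $m-2$ together with one identity involving only theta-type series and finite sums.

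\textbf{Base cases.} For $m=1$, the left side is $\sum_n q^{2n+1}/D_{3,n}(x)$. This equals $(1+xq)(1+x^{-1}q)$ times the left side of \eqref{p-2}, since $D_{1,n+1}(x)=(1+xq)(1+x^{-1}q)D_{3,n}(x)$. Hence the case $m=1$ should be a rearrangement of Ramanujan's identity \eqref{p-2}, which Andrews proved in \cite{Andrews-1981}. For $m=0$, the recurrence relates the left side $\sum_n q^{2n}/D_{2,n}(x)=F_2(x)$ to $F_0(x)$. Here $D_{0,n}(x)$ begins with the factor $(1+x)(1+x^{-1})$, so this is again an Andrews/Ramanujan-type sum in base $q^2$.

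I would obtain this base case from Andrews' transformation (in Agarwal's Sears ${}_3\phi_2$ form) specialized in base $q^2$ with $b=-xq^2$, $c=-x^{-1}q^2$. This is presumably the common two-parameter specialization announced as Lemma~\ref{unified-identity}; with the paper's $B,C$ this corresponds to $B=-x$, $C=-x^{-1}$ up to the shift. The transformation turns the sum into a partial theta series $\sum(-1)^{n}x^{2n+1}q^{n^2+n}$ divided by $D_2(x)$, plus a ``$3n^2$'' partial theta series. For $m=0$ the finite sum in the theorem is empty, and one checks that the coefficient $(-xq^{0};q^2)_1=1+x$ is consistent.

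\textbf{Inductive step.} Assume the formula for $m-2$, which expresses $F_m(x)$. Multiply by $q^{m-2}(1+xq^m)(1+x^{-1}q^m)$ and subtract the resulting $1$-term; three checks then remain.

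First, the infinite-product term is consistent, since $(1+xq^m)(1+x^{-1}q^m)/D_m(x)=1/D_{m+2}(x)$.

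Second, the $3n^2$ partial theta series must be transformed. Shifting $n\to n+1$ inside $\sum_n x^{3n+m-2}q^{3n^2+(3m-5)n}(1-x^2q^{4n+2m-2})$ produces $\sum_n x^{3n+m}q^{3n^2+(3m+1)n}(\cdots)$, times a power of $x$ and $q$, plus boundary terms coming from $n=0$. The prefactor changes according to
\[
(-xq^{-m};q^2)_{m+1}=(1+xq^{-m})(1+xq^{-m+2})\,(-xq^{-m+4};q^2)_{m-1},
\]
and this factor must combine with $(1+xq^m)(1+x^{-1}q^m)$ and with the sign $(-1)^m$ and the power $q^{m^2+m}$.

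Third, the leftover boundary terms, together with $-q^{m-2}(1+xq^m)(1+x^{-1}q^m)$ and the transported old finite sum, must add up to $q^m(1+xq^m)\sum_{n=0}^{m-1}(-xq^{-m};q^2)_{n+1}(-xq^m)^n$.

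\textbf{Main obstacle.} I expect the hardest step to be this last bookkeeping: showing that the finite correction sums telescope correctly, because the summand $(-xq^{-m};q^2)_{n+1}$ depends on $m$ through its base point. I would first try to rewrite $(1+x^{-1}q^m)(-xq^{-m+2};q^2)_{n+1}$ as $x^{-1}q^{m}(-xq^{-m};q^2)_{n+2}/(1+xq^{-m})$, and then compare the sums term by term. If the bookkeeping becomes unwieldy, the fallback is to skip the induction. Instead, apply the same specialization of Andrews' transformation directly with $b=-xq^{m+2}$, $c=-x^{-1}q^{m+2}$. The finite sum then appears as the truncated initial segment produced when the resulting partial theta series, indexed from a shifted start, is rewritten to begin at $n=0$.
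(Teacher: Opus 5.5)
Your induction is a genuinely different route from the paper's, and it works. The paper proves every $m$ at once. It sets $B=C=0$ in Lemma~\ref{unified-identity}, splits $\sum_{n\ge0}(-xq^{-m};q^2)_{n+1}(-xq^m)^n$ at $n=m$, and converts the tail into the $3n^2$ series with Lemma~\ref{RF-zero}. Your fallback resembles this, except that the finite sum is the initial segment of that non-theta series; only the tail becomes a theta series. The proof of Lemma~\ref{unified-identity} uses exactly your recurrence $F_m=1+\lambda F_{m+2}$. It iterates upward to $m\to\infty$ and kills the error with a boundary estimate, so it needs no base case, whereas you anchor at $m=0,1$.

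Your gain is that, beyond the base cases, only a finite Laurent-polynomial identity must be checked, with no analytic estimates. Your loss is twofold. First, the argument rests on Andrews' proofs of \eqref{p-1} and \eqref{p-2}, so it cannot re-derive them as Section~\ref{sec-special-cases} does. Second, it uses that $B=C=0$ is invariant under $B\mapsto Bq^2$, so the same induction does not close for Theorems~\ref{partial-thm2}--\ref{partial-thm5}: there, peeling off $n=0$ changes the numerator.

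The step you flag is where your plan is incomplete, and your suggested tactic would not close it. Your rewriting is off by a factor; the correct identity is $(1+x^{-1}q^m)(-xq^{-m+2};q^2)_{n+1}=x^{-1}q^m(-xq^{-m};q^2)_{n+2}$. Even after that fix, level-$(m-2)$ summands carry $(-xq^{m-2})^n$ while level-$m$ summands carry $(-xq^m)^n$, so nothing matches term by term. What works is the finite form of Step~2 of Lemma~\ref{unified-identity}. With $A_n=(-xq^{-m+2};q^2)_n(-xq^{m-2})^n$,
\[
A_n-A_{n+1}=(1+xq^{m-2})(-xq^{-m+2};q^2)_{n+1}(-xq^{m-2})^n-\frac{xq^{-m+2}}{1+xq^{-m}}(-xq^{-m};q^2)_{n+1}(-xq^m)^n .
\]
Sum this over $0\le n\le m-1$:

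\begin{itemize}
\item The second sum is the level-$m$ correction sum.
\item The first sum is the level-$(m-2)$ correction sum plus its terms $n=m-2,m-1$. Those two terms, with the factor $1+xq^{m-2}$, together with $A_m$, collapse to $(-xq^{-m+2};q^2)_{m-1}(-xq^{m-2})^{m-2}(1-x^2q^{2m-2})$.
\item Times $q^{m-2}$, that is exactly what the $n=0$ term leaves when you shift the level-$(m-2)$ theta series. With $\Phi_m:=\sum_{n\ge0}x^{3n}q^{3n^2+(3m+1)n}(1-x^2q^{4n+2m+2})$, the shift reads $\Phi_{m-2}=1-x^2q^{2m-2}+x^3q^{3m-2}\Phi_m$.
\item The $\Phi_m$ parts then agree identically, as you expected, because $(1+x^{-1}q^m)x=q^m(1+xq^{-m})$ and $(-xq^{-m};q^2)_{m+1}=(1+xq^{-m})(-xq^{-m+2};q^2)_{m-1}(1+xq^m)$.
\end{itemize}

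Multiply the summed identity by $q^{m-2}(1+xq^m)(1+x^{-1}q^m)$ and use $(1+x^{-1}q^m)\,xq^{-m+2}=q^2(1+xq^{-m})$; this gives precisely the finite identity your inductive step needs.

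For the base case $m=0$ you need no transformation: it is \eqref{p-1} after $x\mapsto-x$, $q\mapsto q^{1/2}$, so cite Andrews as you do for $m=1$. Your identification $B=-x$, $C=-x^{-1}$ misreads Lemma~\ref{unified-identity}. There $-xq^{m+2}$ and $-x^{-1}q^{m+2}$ are the denominator parameters, and this theorem is the case $B=C=0$.
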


	\begin{thm}\label{partial-thm2} For $m\geq 0$ and $x\neq0$,
		\begin{align*}
			&\sum_{n=0}^{\infty}\frac{(q;q^2)_nq^{2n+m}}{D_{m+2,n}(x)}\\
			&=\frac{(q;q^2)_{\infty}}{D_{m+2}(x)}
			\sum_{n=0}^{m-1}
			\frac{(-1)^{n+1}x^{2n+1}q^{n^2+n}}
			{(-xq^{-m+1};q^2)_{n+1}}+(1+xq^m)q^m\sum_{n=0}^{m-1}
			\frac{(-xq^{-m};q^2)_{n+1}(-xq^m)^n}
			{(-xq^{-m+1};q^2)_{n+1}}\\
			&\quad+\frac{(q;q^2)_{\infty}(-1)^{m+1}x^{2m+1}q^{m^2+m}}
			{D_{m+2}(x)(-xq^{-m+1};q^2)_m}
			\sum_{n=0}^{\infty}x^{3n}q^{3n^2+(3m+2)n}
			(1-xq^{2n+m+1})\\
			&\quad+\frac{(-xq^{-m};q^2)_{m+1}(-x)^mq^{m^2+m}}
			{(-xq^{-m+1};q^2)_m}
			\sum_{n=0}^{\infty}x^{2n}q^{2n^2+(2m+1)n}
			(1-xq^{2n+m+1}).
		\end{align*}
	\end{thm}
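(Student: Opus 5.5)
The plan is to obtain Theorem~\ref{partial-thm2} as a specialization of the unified identity of Lemma~\ref{unified-identity} (the two-parameter specialization of Andrews' transformation), working throughout in base $q^2$. With $p=q^2$, the left-hand side reads
\[
q^m\sum_{n\ge0}\frac{(q;p)_n\,p^n}{(-xq^{m}\cdot p,\;-x^{-1}q^{m}\cdot p;p)_n}.
\]
This is the shape covered by the lemma, with the denominator pair $a p,\, a^{-1}q^{2m}p$ where $a=-xq^m$, and with the numerator parameter (the ``$B$'' or ``$C$'' of the lemma) chosen equal to $q$. I will choose the second free parameter so that the two denominator arguments become $-xq^{m+2}$ and $-x^{-1}q^{m+2}$. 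Compared with Theorem~\ref{partial-thm1}, the only difference is that the numerator parameter is $q$ instead of $0$, exactly as \eqref{p-6} differs from \eqref{p-2}. The lemma then expresses the sum as a partial-theta-type series plus $(q;q^2)_\infty/D_{m+2}(x)$ times a second partial-theta-type series.

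The second step is to reshape the two series. The lemma produces series whose index naturally starts at a shifted point. The asymmetry between $x q^{m}$ and $x^{-1}q^{m}$ (their product is $q^{2m}$, not $1$) means the natural summation variable is offset by $m$ relative to the symmetric case. I will reindex $n\mapsto n+m$ in each series, so that the tails take the displayed forms $\sum_{n\ge0}x^{3n}q^{3n^2+(3m+2)n}(1-xq^{2n+m+1})$ and $\sum_{n\ge0}x^{2n}q^{2n^2+(2m+1)n}(1-xq^{2n+m+1})$. The first $m$ terms split off in each case become the two finite sums in the statement. In doing so I will use $(a;q^2)_{n+m}=(a;q^2)_m(aq^{2m};q^2)_n$ and the negative-index convention $(a;q)_{-n}=1/(aq^{-n};q)_n$. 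These identities convert the quotients $(C/a;p)_n$-type factors into $(-xq^{-m+1};q^2)_{n+1}$ and $(-xq^{-m};q^2)_{n+1}$, and they produce the prefactors $(-xq^{-m};q^2)_{m+1}(-x)^mq^{m^2+m}/(-xq^{-m+1};q^2)_m$ and $(-1)^{m+1}x^{2m+1}q^{m^2+m}/(-xq^{-m+1};q^2)_m$. Pairing terms $n$ and $-n-1$ in the bilateral pieces will generate the factors $(1-xq^{2n+m+1})$.

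As a consistency check, I will verify that $m=0$ collapses the statement to Ramanujan's \eqref{p-6}: both finite sums vanish, and the prefactors reduce correctly. I will also check the first few powers of $q$ for $m=1$.

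I expect the main obstacle to be the bookkeeping in the second step: tracking the powers of $x$ and $q$ and the signs when shifting the index by $m$, and making sure the split-off finite terms regroup exactly into the two displayed finite sums, with the product $(q;q^2)_\infty/D_{m+2}(x)$ attached to one and not the other. If the direct reindexing becomes unwieldy, my fallback is to prove the $m\to m+1$ step. I would derive a recurrence in $m$ for the left-hand side from $1/(1+xq^{m+2}\cdots)$ partial fractions, check that the right-hand side satisfies the same recurrence, and start the induction from \eqref{p-6}.
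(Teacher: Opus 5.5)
Your first step matches the paper, which takes $B=q$ and $C=0$ in Lemma~\ref{unified-identity} and then splits both resulting series at $n=m$. Here $C=0$ is understood as the limit \eqref{master-C-zero}. The denominators $D_{m+2,n}(x)$ are already fixed by the lemma, so no parameter needs to be tuned to produce them.

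The gap is in your second step. With $B=q$, neither series delivered by the lemma is of partial theta type. Both carry the denominator $(-xq^{-m+1};q^2)_{n+1}$, and this is exactly what separates Theorem~\ref{partial-thm2} from Theorem~\ref{partial-thm1}. Split off the first $m$ terms and shift $n\mapsto n+m$. Apart from the prefactors $(-1)^mx^{2m}q^{m^2+m}/(-xq^{-m+1};q^2)_{m+1}$ and $(-xq^{-m};q^2)_{m+1}(-xq^m)^m/(-xq^{-m+1};q^2)_{m+1}$, the tails are
\[
\sum_{n\ge0}\frac{(-1)^nx^{2n}q^{n^2+(2m+1)n}}{(-xq^{m+3};q^2)_n}
\qquad\text{and}\qquad
\sum_{n\ge0}\frac{(-xq^{m+2};q^2)_n(-xq^m)^n}{(-xq^{m+3};q^2)_n}.
\]
No reindexing or splitting of $q$-shifted factorials turns these into $\sum x^{3n}q^{3n^2+(3m+2)n}(1-xq^{2n+m+1})$ and $\sum x^{2n}q^{2n^2+(2m+1)n}(1-xq^{2n+m+1})$. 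The denominators do not go away, and the first tail has quadratic exponent $n^2$ where the target has $3n^2$. There is also no bilateral series anywhere in the argument, so ``pairing $n$ with $-n-1$'' has nothing to act on.

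The missing ingredient is an actual transformation: the Rogers--Fine identity \eqref{RF} with $q\mapsto q^2$ and $\beta=-xq^{m+3}$. For the second tail take $\alpha=-xq^{m+2}$ and $\tau=-xq^m$. For the first tail take $\alpha=x^2q^{2m+2}/\tau$ and let $\tau\to0$.
\begin{itemize}
\item In both cases $\alpha\tau q^2/\beta=-xq^{m+1}$, so the product quotient collapses to $(1+xq^{m+1})/(1+xq^{2n+m+1})$.
\item In the second case there is an extra factor $1/(1+xq^m)$, which is absorbed by the outer factor $1+xq^m$.
\item The Rogers--Fine factor $1-\alpha\tau q^{4n}=1-x^2q^{4n+2m+2}$, divided by $1+xq^{2n+m+1}$, is what produces $1-xq^{2n+m+1}$.
\item The leftover $1+xq^{m+1}$ reduces $(-xq^{-m+1};q^2)_{m+1}$ to the $(-xq^{-m+1};q^2)_m$ appearing in the statement.
\end{itemize}

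Your fallback does not fill this gap as stated, because you do not exhibit the recurrence in $m$, and none is apparent. The step $m\mapsto m+1$ moves both denominator parameters by a half step $q$ in base $q^2$. The lemma's own recurrence instead sends $(m,B)\mapsto(m+2,Bq^2)$, which leaves the family $B=q$.
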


	\enlargethispage{2\baselineskip}
	\begin{thm}\label{partial-thm3} For $m\geq0$, $x\neq0$, and $|xq^{1-m}|<1$,
		\begin{align*}
			&\sum_{n=0}^{\infty}\frac{(-1;q)_{2n}q^{2n+m}}{D_{m,n+1}(x)}\\
			&=-\frac{(-1;q)_{\infty}}
			{(xq^{-m};q)_{2m+1}D_m(x)}\sum_{n=0}^{\infty}
			(xq^{2n-m+2};q)_{2m-1}x^{2n+1}
			q^{2n^2+(1-2m)n}(1-x^2q^{4n+2})\\
			&\quad+x\sum_{n=0}^{m-1}
			\frac{(q^{-2m+1},-xq^{-m+2};q^2)_n(-xq^m)^n}
			{(xq^{-m};q)_{2n+2}}\\
			&\quad+\frac{(-xq^{-m+2},q^{-2m+1};q^2)_m(-xq^m)^m}
			{(1+xq^m)(xq^{-m};q)_{2m+1}}\sum_{n=0}^{\infty}
			x^{n+1}q^{n(n+m)}(1+xq^{2n+m+1}).
		\end{align*}
	\end{thm}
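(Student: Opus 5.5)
Since Lemma~\ref{unified-identity} (the two-parameter specialization of Andrews' transformation) is stated only later, I will work directly from Andrews' transformation in the form underlying Ramanujan's identities \eqref{p-5}. That identity is the case $m=0$ of Theorem~\ref{partial-thm3}, so the first check of any step is that it reproduces \eqref{p-5} at $m=0$.

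\textbf{Setting up the specialization.} The left side
\[
\sum_{n\ge0}\frac{(-1;q)_{2n}q^{2n+m}}{(-xq^m,-x^{-1}q^m;q^2)_{n+1}}
\]
is of the shape
\[
\sum_n \frac{(B;q^2)_n(C;q^2)_n\, t^n}{(\alpha;q^2)_{n+1}(\beta;q^2)_{n+1}}
\]
in base $q^2$, since $(-1;q)_{2n}=(-1,-q;q^2)_n$. The parameters are $\alpha=-xq^m$, $\beta=-x^{-1}q^m$, $B=-1$, $C=-q$ and $t=q^2$. I would first rewrite the denominator factor $(-x^{-1}q^m;q^2)_{n+1}$ using the reflection
\[
(-x^{-1}q^{m};q^2)_{n+1}=(-x^{-1})^{n+1}q^{m(n+1)+n(n+1)}(-xq^{-m-2n};q^2)_{n+1}.
\]
This turns the sum into a series in which the $x$-dependence is polynomial-times-Pochhammer. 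It is exactly the input to Andrews' transformation, and it yields three kinds of output:
\begin{enumerate}
\item a partial theta series with quadratic exponent $2n^2$, coming from the $q^2$-structure;
\item an infinite-product multiple of a second partial theta series;
\item a correction term.
\end{enumerate}

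\textbf{Shifting by $m$.} The cleaner route is to write $-xq^m=-(xq^{m})$, i.e.\ to apply \eqref{p-5}-type machinery with $x$ replaced by a shifted variable. The mismatch between $-xq^m$ and $-x^{-1}q^m$ (not mutually reciprocal after the substitution $y=xq^{m}$) is then absorbed by writing
\[
(-x^{-1}q^m;q^2)_\infty=(-y^{-1}q^{2m};q^2)_\infty=\frac{(-y^{-1};q^2)_\infty}{(-y^{-1};q^2)_m}.
\]
This produces the finite ratios $(xq^{-m};q)_{2m+1}$ and $(-xq^{-m+2};q^2)_m$ that appear in the statement. Concretely, I would:
\begin{enumerate}
\item apply the general transformation with the base-$q^2$ parameters above;
\item expand the resulting bilateral or partial theta pieces;
\item split the sum over $n$ at $n=m$. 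The terms $n<m$ give the finite sum $x\sum_{n=0}^{m-1}\cdots$, while the tail, reindexed by $n\mapsto n+m$, gives the last partial theta series $\sum_n x^{n+1}q^{n(n+m)}(1+xq^{2n+m+1})$ with prefactor $(-xq^{-m+2},q^{-2m+1};q^2)_m(-xq^m)^m/\bigl((1+xq^m)(xq^{-m};q)_{2m+1}\bigr)$.
\end{enumerate}
The product term $(-1;q)_\infty/D_m(x)$ times the series with $(xq^{2n-m+2};q)_{2m-1}$ should come from the theta-function piece. In that piece, pairing $n$ with $-n-1$ in a bilateral sum folds it into the factor $(1-x^2q^{4n+2})$.

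\textbf{Fallback via $q$-difference induction.} Alternatively, I would mirror the paper's method and prove the identity by induction on $m$. Denote by $L_m(x)$ and $R_m(x)$ the two sides. I would find a relation between $L_{m+1}(xq^{\pm1})$ and $L_m(x)$ obtained from a telescoping identity on the summand, namely
\[
\frac{1}{D_{m,n+1}}-\frac{1}{D_{m,n}}\cdot(\text{simple factor}),
\]
and verify that $R_m$ satisfies the same relation. The base case $m=0$ is \eqref{p-5}.

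\textbf{Main obstacle.} I expect the main obstacle to be the product term: matching the Pochhammer $(xq^{2n-m+2};q)_{2m-1}$ inside the sum. This factor arises from the quotient $(xq^{2n-m+2};q)_\infty/(xq^{2n+m+1};q)_\infty$. To recognize it, one must rewrite the transformed $q$-series so that its summand carries this ratio, and check convergence under $|xq^{1-m}|<1$, which is needed for the series produced by the transformation to converge. I would first test $m=1$ numerically or symbolically to pin down signs and the powers $(-xq^m)^n$ before writing the general bookkeeping.
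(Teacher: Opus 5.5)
Your specialization $(B,C)=(-1,-q)$, read off from $(-1;q)_{2n}=(-1,-q;q^2)_n$, is the same as the paper's. However, the proposal does not contain a proof: you never state or apply the transformation, and the decisive step is only asserted. With these parameters the second piece of the transformation, after multiplying by $q^m/\bigl((1+xq^m)(1+x^{-1}q^m)\bigr)$, is $x\sum_{n\ge0}(q^{1-2m},-xq^{2-m};q^2)_n(-xq^m)^n/(xq^{-m};q)_{2n+2}$. Its denominator comes from $(-Bxq^{-m},-Cxq^{-m};q^2)_{n+1}$, not from your $y$-shift. Splitting at $n=m$ does give the finite sum. But $q^{1-2m}$ is an odd power, so nothing terminates, and the reindexed tail is
\[
x\,\frac{(q^{1-2m},-xq^{2-m};q^2)_m(-xq^m)^m}{(xq^{-m};q)_{2m+1}}
\sum_{n=0}^{\infty}\frac{(q,-xq^{m+2};q^2)_n(-xq^m)^n}{(xq^{m+1};q)_{2n+1}},
\]
whose series is not a partial theta series. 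The missing idea is the summation
\[
\sum_{n=0}^{\infty}\frac{(q,-xq^{m+2};q^2)_n(-xq^m)^n}{(xq^{m+1};q)_{2n+1}}
=\frac{1}{1+xq^m}\sum_{n=0}^{\infty}x^nq^{n^2+mn}(1+xq^{2n+m+1}).
\]
The paper proves it by Heine's transformation with $h=2$, then a second Heine step with parameters $\pm ix^{1/2}q^{m/2}$, then Rogers--Fine.

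The product term likewise does not come from folding a bilateral sum; no bilateral series occurs. It is Rogers--Fine in base $q^2$ with $(\alpha,\beta,\tau)=(xq^{m+1},xq^{2-m},xq^{1-m})$, applied to $\sum_n(xq^{m+1};q^2)_n(xq^{1-m})^n/(xq^{-m};q^2)_{n+1}$. The factor $1-\alpha\tau q^{4n}=1-x^2q^{4n+2}$ is built into that identity. The quotient $(xq^{2n-m+2};q)_{2m-1}/(xq^{-m};q)_{2m+1}$ comes from splitting $(xq^{-m};q)_{2n+2m+1}$ in two ways.

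Your anchor is also wrong: \eqref{p-5} is not the case $m=0$ of this theorem. At $m=0$ the left side is $\sum_n(-1;q)_{2n}q^{2n}/(-x,-x^{-1};q^2)_{n+1}$, which carries $(-1;q)_{2n}$ rather than $(-q;q)_{2n}$. In fact \eqref{p-5} is the case $m=1$ of Theorem~\ref{partial-thm5} with $x\mapsto -x$, and the present theorem specializes to none of the six displayed identities. So both your consistency check and the base case of your fallback induction disappear.

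The fallback also misreads the paper's method. Its recurrence sends $(m,B,C)\mapsto(m+2,Bq^2,Cq^2)$, so it leaves the family $(B,C)=(-1,-q)$. It is closed not by a base case but by iterating $N$ times and showing, via cancellation of the two $q^{-2N}$ terms, that the error stays bounded as $N\to\infty$. You write down no recurrence in $m$ with $B,C$ fixed. Checking one against the right side, with its $m$-dependent finite sum and the products $(xq^{-m};q)_{2m+1}$, would be the entire difficulty.

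Incidentally, your reflection formula has a spurious sign: the prefactor should be $x^{-(n+1)}q^{m(n+1)+n(n+1)}$, not $(-x^{-1})^{n+1}q^{m(n+1)+n(n+1)}$.
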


	\begin{thm}\label{partial-thm4} For $m\geq0$, $x\neq0$, and $|xq^{1-m}|<1$,
		\begin{align*}
			&\sum_{n=0}^{\infty}\frac{(q^2;q^4)_nq^{2n+m}}{D_{m,n+1}(x)}\\*
			&=-\frac{x(q^2;q^4)_{\infty}}
			{(x^2q^{2-2m};q^4)_mD_m(x)}\sum_{n=0}^{\infty}(-1)^n
			(x^2q^{4n-2m+6};q^4)_{m-1}x^{2n}
			q^{2n^2+(2-2m)n}(1-x^2q^{4n+2})\\
			&\quad+x\sum_{n=0}^{m-1}
			\frac{(-xq^{-m+2},-q^{-2m+2};q^2)_n(-xq^m)^n}
			{(x^2q^{2-2m};q^4)_{n+1}}\\
			&\quad+\frac{(-xq^{-m+2};q^2)_{m-1}
				(-q^{-2m+2};q^2)_m(-xq^m)^m}
			{(x^2q^{2-2m};q^4)_m}\sum_{n=0}^{\infty}
			(-1)^nx^{n+1}q^{n(n+m+1)}.
		\end{align*}
	\end{thm}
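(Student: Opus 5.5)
Theorem~\ref{partial-thm4} will come out of the unified two-parameter specialization of Andrews' transformation (Lemma~\ref{unified-identity}), taken with base $q^2$. The left side is
\[
\sum_{n\ge0}\frac{(q^2;q^4)_n\,q^{2n+m}}{(-xq^m,-x^{-1}q^m;q^2)_{n+1}}.
\]
The factor $(q^2;q^4)_n=(q,-q;q^2)_n$ suggests taking the numerator parameters $B=q$ and $C=-q$ in base $q^2$, with the two denominator parameters $-xq^m$ and $-x^{-1}q^m$. This is the same pattern as Theorem~\ref{partial-thm3}, where $(-1;q)_{2n}=(-1,-q;q^2)_n$. The condition $|xq^{1-m}|<1$ is there to keep the partial theta series on the right convergent after the shift by $m$.

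\textbf{Step 1: reduce to an $m$-independent sum.} I will write
\[
\frac{1}{(-x^{-1}q^m;q^2)_{n+1}}
=\frac{(-x^{-1}q^{2-m};q^2)_{m-1}\,(\ldots)}{(-x^{-1}q^{2-m};q^2)_{n+m}},
\]
that is, shift the index of the $x^{-1}$ factor down by $m$. Equivalently, I substitute $x\mapsto xq^{-m}$ in a base identity, which turns $D_{m,n}(x)$ into an unbalanced pair $(-xq^{0},-x^{-1}q^{2m};q^2)_n$. The plan is to apply Lemma~\ref{unified-identity} directly with parameters $a=-xq^m$ and $b=-x^{-1}q^m$. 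The lemma should produce three kinds of terms:
\begin{itemize}
\item a theta-type product term $(q^2;q^4)_\infty/D_m(x)$ times a bilateral-like sum;
\item a partial theta series in $x$;
\item a terminating correction.
\end{itemize}

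\textbf{Step 2: split off the finite correction.} With $x$ replaced by $xq^{-m}$, the transformed series has terms indexed by $n$ involving $(-xq^{-m+2},-q^{-2m+2};q^2)_n$. Since $(-q^{-2m+2};q^2)_n$ does not terminate, the first $m$ terms $n=0,\dots,m-1$ have to be peeled off by hand. This gives the finite sum in the statement. For the remaining tail I shift $n\mapsto n+m$ and use
\[
(\alpha;q^2)_{n+m}=(\alpha;q^2)_m(\alpha q^{2m};q^2)_n .
\]
This turns the tail into $(-xq^{-m+2};q^2)_{m-1}(-q^{-2m+2};q^2)_m(-xq^m)^m/(x^2q^{2-2m};q^4)_m$ times an unshifted series. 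That series should then be summable as the partial theta function $\sum(-1)^nx^{n+1}q^{n(n+m+1)}$, presumably via the $m=0$ case (a Rogers–Fine or $q$-Gauss type evaluation). The denominators $(x^2q^{2-2m};q^4)_{n+1}$ arise from combining $(xq^{1-m},-xq^{1-m};q^2)$ into $(x^2q^{2-2m};q^4)$.

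\textbf{Step 3: the theta-product term.} Shifting the infinite product $(-x^{-1}q^{m};q^2)_\infty$ against $(-xq^{m};q^2)_\infty$ leaves a quasi-periodicity factor. The remaining bilateral sum, after $n\mapsto n$ with the shifted product, takes the form
\[
\sum_n(-1)^n(x^2q^{4n-2m+6};q^4)_{m-1}\,x^{2n}q^{2n^2+(2-2m)n}(1-x^2q^{4n+2}).
\]
I will derive this by expanding $(x^2q^{2-2m};q^4)_m$ relative to $(x^2q^{2};q^4)_\infty$-type factors, then folding the negative-index half onto the positive one, which produces the factor $(1-x^2q^{4n+2})$.

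\textbf{Main obstacle.} The delicate part is the bookkeeping in Step 2: identifying the tail exactly as a multiple of the $m=0$ partial theta sum, with the correct prefactor and powers $(-xq^m)^m$ and $q^{n(n+m+1)}$. If a direct shift fails, my fallback is induction on $m$. I would use the $q$-difference relation from the proof of Lemma~\ref{unified-identity}, relating the left side at $m$ and $m+1$ via partial fractions in $x$. Then I would check that each of the three right-hand pieces satisfies the same first-order recurrence, with $m=0$ as the base case.
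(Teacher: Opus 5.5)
There is a genuine gap. Your skeleton agrees with the paper: Lemma~\ref{unified-identity} with $(B,C)=(q,-q)$, multiplied by $q^m/\bigl((1+xq^m)(1+x^{-1}q^m)\bigr)$, with the second series split at $n=m$. No substitution $x\mapsto xq^{-m}$ is needed, and the lemma has no free denominator parameters ``$a,b$''.

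The gap is that the one hard step is never supplied. After $n\mapsto n+m$, the raw prefactor is $(-xq^{-m+2},-q^{-2m+2};q^2)_m(-xq^m)^m/(x^2q^{2-2m};q^4)_{m+1}$, multiplying
\[
T=\sum_{n\ge0}\frac{(-xq^{m+2},-q^2;q^2)_n(-xq^m)^n}{(x^2q^{2m+6};q^4)_n}.
\]
The prefactor you wrote, with $(-xq^{-m+2};q^2)_{m-1}$ and $(x^2q^{2-2m};q^4)_m$, appears only after proving
\[
T=\frac{1-x^2q^{2m+2}}{1+xq^m}\sum_{n\ge0}(-1)^nx^nq^{n^2+(m+1)n}.
\]
$T$ is a nonterminating ${}_3\phi_2$ in base $q^2$, with numerators $q^2,-xq^{m+2},-q^2$ and denominators $\pm xq^{m+3}$. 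It is not an instance of Rogers--Fine (one numerator, one denominator) or of the $q$-Gauss sum. The paper notes that the argument $-xq^m$ equals $de/abc$ and applies \eqref{3phi2}. It then applies \eqref{Heineh} with $h=2$, then \eqref{Heineh} with $h=1$ and parameters $\pm x^{1/2}q^{(m+1)/2}$, and finishes with \eqref{RF}.

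As written, ``via the $m=0$ case'' either presupposes the theorem at $m=0$ or relies on an evaluation that does not exist. It becomes a valid, though not self-contained, argument if you explicitly import Ramanujan's \eqref{p-4} with Andrews' proof. Then the $m=0$ instance of the lemma, together with the evaluation of the first term, determines $T$ at $m=0$, and $T$ depends on $x$ only through $xq^m$.

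Your inductive fallback does not close the gap either. The recurrence in the proof of Lemma~\ref{unified-identity} sends $(m,B,C)$ to $(m+2,Bq^2,Cq^2)$. So it neither preserves $(B,C)=(q,-q)$ nor links $m$ to $m+1$, and its base case is again $T$.

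Step 3 is also misdirected. The lemma's first term is the unilateral series $\sum_{n\ge0}(xq^{m+1};q^2)_n(xq^{1-m})^n/(-xq^{1-m};q^2)_{n+1}$. There is no bilateral sum to fold and no quasi-periodicity factor. One application of \eqref{RF} with $q\mapsto q^2$, $\alpha=xq^{m+1}$, $\beta=-xq^{3-m}$, $\tau=xq^{1-m}$ produces the factor $1-\alpha\tau q^{4n}=1-x^2q^{4n+2}$. The product quotient $(x^2q^{2m+2};q^4)_n/(x^2q^{2-2m};q^4)_{n+1}$ then collapses to $(x^2q^{4n-2m+6};q^4)_{m-1}/(x^2q^{2-2m};q^4)_m$, by splitting $(x^2q^{2-2m};q^4)_{n+m}$ in two ways.

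This is where $|xq^{1-m}|<1$ is used: it is $|\tau|<1$, equivalently $|Cxq^{-m}|<1$ in the lemma. It is not needed for the partial theta series, which converge for every $x$.
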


	\begin{thm}\label{partial-thm5} For $m \geq 1$, $x\neq0$, and $|xq^{2-m}|<1$,
		\begin{align*}
			&\sum_{n=0}^{\infty}\frac{(-q;q)_{2n}q^{2n+m}}{D_{m,n+1}(x)}\\
			&=-\frac{(-q;q)_{\infty}}
			{(xq^{-m+1};q)_{2m-1}D_m(x)}\sum_{n=0}^{\infty}
			(xq^{2n-m+3};q)_{2m-3}x^{2n+1}
			q^{2n^2+(3-2m)n}(1-x^2q^{4n+2})\\
			&\quad+x\sum_{n=0}^{m-2}
			\frac{(-xq^{-m+2},q^{-2m+3};q^2)_n(-xq^m)^n}
			{(xq^{-m+1};q)_{2n+2}}\\
			&\quad+\frac{(-xq^{-m+2},q^{-2m+3};q^2)_{m-1}
				(-xq^m)^{m-1}}
			{(xq^{-m+1};q)_{2m-1}}
			\sum_{n\geq0}x^{n+1}q^{n(n+m)}.
		\end{align*}
	\end{thm}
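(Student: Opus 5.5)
The plan is to derive Theorem~\ref{partial-thm5} from the unified two-parameter specialization of Andrews' transformation (Lemma~\ref{unified-identity}). The left side, $\sum_n (-q;q)_{2n}q^{2n+m}/D_{m,n+1}(x)$, has the shape of the Lemma's left side with base $q^2$. Write $(-q;q)_{2n}=(-q,-q^2;q^2)_n$, and shift the denominator $(-xq^m,-x^{-1}q^m;q^2)_{n+1}$ so that it becomes $(-xq^{m}q^{2},-x^{-1}q^mq^2;q^2)_n$ times the factor $(1+xq^m)(1+x^{-1}q^m)$. This identifies the Lemma's parameters $B$ and $C$ with $-q$ and $-q^2$ (up to the normalization used there), and the Lemma's $x$-variable with $-xq^{m}$. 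I would first apply the Lemma at generic $x$ and only afterwards let the integer $m$ enter through the shift $x\mapsto xq^{-m}$ relative to the $m=0$ (or $m=1$) case. This is how the shifted products $(xq^{-m+1};q)_{2m-1}$ and $(-xq^{-m+2},q^{-2m+3};q^2)_{m-1}$ arise.

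Concretely, the Lemma's right side should contain two pieces: an infinite product times a bilateral-type theta series, and a ${}_2\phi_1$-type partial series in $x$. Substituting the parameters, the infinite product becomes $(-q;q)_\infty/D_m(x)$. The theta piece then has to be rewritten in the form $\sum_n (xq^{2n-m+3};q)_{2m-3}x^{2n+1}q^{2n^2+(3-2m)n}(1-x^2q^{4n+2})$. For this I would use $(a;q)_{N+k}=(a;q)_N(aq^N;q)_k$ to pull out the common factor $(xq^{-m+1};q)_{2m-1}$, and pair the terms $n$ and $-n-1$ of the bilateral sum to produce the factor $1-x^2q^{4n+2}$.

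The partial series piece is where the finite sum appears. It is a terminating/nonterminating series whose term ratio involves $(-xq^{-m+2},q^{-2m+3};q^2)_n/(xq^{-m+1};q)_{2n+2}$ times $(-xq^m)^n$. Because $q^{-2m+3}$ is a negative odd power of $q$, nothing truncates. Instead I would split the series at $n=m-1$. The terms with $n\le m-2$ give exactly the displayed finite sum. For the tail $n\ge m-1$, I reindex $n\mapsto n+m-1$, factor out the value at $n=m-1$, which is $(-xq^{-m+2},q^{-2m+3};q^2)_{m-1}(-xq^m)^{m-1}/(xq^{-m+1};q)_{2m-1}$, and simplify the remaining ratio. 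The shifted Pochhammers $(-xq^{m};q^2)_n(q;q^2)_n/(xq^{m};q)_{2n}=(-xq^m,q;q^2)_n/(xq^m,xq^{m+1};q^2)_n$ should reduce the tail to a known partial theta form. Applying the $m=1$ instance of the Lemma (or Rogers--Fine-type summation) should then collapse it to $\sum_{n\ge0}x^{n+1}q^{n(n+m)}$. Convergence requires $|xq^{2-m}|<1$, which is exactly the hypothesis.

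I expect the main obstacle to be the tail reduction. This means verifying that, after the shift, the residual series is precisely $\sum x^{n+1}q^{n(n+m)}$ with no leftover theta-type contribution. Otherwise that contribution would have to be absorbed into the first line. If the direct simplification fails, I would instead prove the identity by induction on $m$. The base case is $m=1$ from the Lemma. For the inductive step, I would compare the statement at $m$ and $m+1$ via the substitution $x\mapsto xq$, since $D_{m+1,n+1}(x)$ relates to $D_{m,n+1}$ at shifted arguments after using the $x\leftrightarrow x^{-1}$ symmetry. The finite sum then gains exactly one term, and the products change by explicit factors that can be checked termwise.
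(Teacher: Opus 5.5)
Your outer skeleton matches the paper's: specialize Lemma~\ref{unified-identity} at $(B,C)=(-q,-q^2)$, multiply by $q^m/\bigl((1+xq^m)(1+x^{-1}q^m)\bigr)$, and split the second series at $n=m-1$. However, the Lemma already carries $m$, and $m$ cannot be produced by any substitution in $x$. The product of the two denominator parameters, $xq^m\cdot x^{-1}q^m=q^{2m}$, does not depend on $x$. This rules out both your remark about introducing $m$ via $x\mapsto xq^{-m}$ and your fallback induction via $x\mapsto xq$: no substitution in $x$ turns $D_{m,n+1}$ into $D_{m+1,n+1}$.

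The first genuine gap is the first piece on the right. With these parameters it is not a bilateral theta series but the unilateral series
\[
\sum_{n\ge0}\frac{(xq^m;q^2)_n(xq^{2-m})^n}{(xq^{1-m};q^2)_{n+1}},
\]
so there are no terms $n$ and $-n-1$ to pair. The stated form comes from the Rogers--Fine identity \eqref{RF} in base $q^2$ with $(\alpha,\beta,\tau)=(xq^m,xq^{3-m},xq^{2-m})$:
\begin{itemize}
\item its factor $1-\alpha\tau q^{4n}$ is exactly $1-x^2q^{4n+2}$;
\item splitting $(xq^{1-m};q)_{2n+2m-1}$ in two ways turns the remaining products into $(xq^{2n-m+3};q)_{2m-3}/(xq^{1-m};q)_{2m-1}$.
\end{itemize}
This is also where $|xq^{2-m}|<1$ is needed: it is $|\tau|<1$, i.e.\ the Lemma's $|Cxq^{-m}|<1$. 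The tail only needs $|xq^m|<1$.

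The second gap is the tail, which you rightly flag as the main obstacle. After factoring out $(xq^{1-m};q)_{2m-1}$, the remaining denominator is $(xq^m;q)_{2n+1}=(1-xq^m)(xq^{m+1};q)_{2n}$, not $(xq^m;q)_{2n}$. What must be proved is
\[
\sum_{n\ge0}\frac{(-xq^m,q;q^2)_n(-xq^m)^n}{(xq^{m+1};q)_{2n}}=(1-xq^m)\sum_{n\ge0}x^nq^{n^2+mn}.
\]
Neither tool you suggest gives this.
\begin{itemize}
\item Rogers--Fine does not apply directly, since this is a ${}_3\phi_2$ in base $q^2$ with two denominator parameters.
\item The $m=1$ Lemma is circular. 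Up to a constant factor, this series is the second series on the right of \eqref{master-identity} with $(B,C)=(-q,-q^2)$ at $m=1$, with $xq^{m-1}$ in place of $x$. The Lemma only re-expresses that series through the left-hand side and the first series, that is, through the $m=1$ case of the theorem itself.
\end{itemize}
For the same reason your base case ``$m=1$ from the Lemma'' is not available: it requires exactly this evaluation.

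The paper supplies the evaluation in three steps:
\begin{itemize}
\item apply \eqref{Heineh} with $h=2$ and $(a,b,c,t)=(-xq^m,q,xq^{m+1},-xq^m)$;
\item apply \eqref{Heineh} with $h=1$, $a,b=\pm ix^{1/2}q^{m/2}$ (so that $(a,b;q)_n=(-xq^m;q^2)_n$), $c=-xq^m$, $t=q$;
\item apply \eqref{RF} with $(\alpha,\beta,\tau)=(-ix^{1/2}q^{m/2},ix^{1/2}q^{m/2+1},-ix^{1/2}q^{m/2})$, after which the infinite-product prefactors collapse to $1-xq^m$.
\end{itemize}
Alternatively, you could cite Ramanujan's \eqref{p-5} as known; it is the $m=1$ case after $x\mapsto -x$. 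Comparing it with the $m=1$ Lemma, after the Rogers--Fine step above, evaluates that second series, and the substitution $x\mapsto xq^{m-1}$ then gives the tail for every $m$. Either way, some such evaluation has to be supplied, and your proposal does not contain one.
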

	\endgroup
	
	\begin{rem}
		When $m=1$, Theorem~\ref{partial-thm5}, after replacing $x$ by $-x$, reduces to Ramanujan's identity \eqref{p-5}. Kim and Lovejoy \cite[Eq.~(1.9)]{Kim-Lovejoy-2015} used this identity to study rank differences for a class of special unimodal sequences. Thus Theorem~\ref{partial-thm5} may also be viewed as an $m$-parameter extension of the partial theta identity underlying their work.
	\end{rem}
	
	The paper is organized as follows. Section~\ref{sec-preliminaries} collects
	standard identities and proves the common two-parameter identity by a
	$q$-difference recurrence. Section~\ref{sec-proofs} derives
	Theorems~\ref{partial-thm1}--\ref{partial-thm5} from this identity by taking
	\[
	(B,C)=(0,0),\ (q,0),\ (-1,-q),\ (q,-q),\ (-q,-q^2),
	\]
	respectively. Section~\ref{sec-special-cases} recovers the six Ramanujan
	identities displayed in the introduction. Section~\ref{sec-bailey} derives a
	residual identity and the corresponding conjugate Bailey pair from
	Theorem~\ref{partial-thm5}.
	
	\section{Preliminaries and a unified identity}\label{sec-preliminaries}
	Recall the definition
	\begin{align*}
		_r\phi_s\left(\begin{array}{c}
			a_1,a_2,\ldots, a_r\\
			b_1,b_2,\ldots, b_s
		\end{array}
		;q,x\right)=\sum_{n=0}^{\infty}\frac{(a_1,a_2,\ldots,a_r;q)_n}
		{(q,b_1,\ldots,b_s;q)_n}\left((-1)^nq^{\binom{n}{2}}\right)^{1+s-r}x^n.
	\end{align*}
	We shall use the following standard identities.
	
	\begin{lem}\cite[Theorem~1.2.1]{Andrews-Berndt-2009} \label{Heine}
		If $h$ is a positive integer, then, for $|t|,|b|<1$,
		\begin{align}\label{Heineh}
			\sum_{n=0}^{\infty}\frac{(a;q^h)_n(b;q)_{hn}t^n}{(q^h;q^h)_n(c;q)_{hn}}
			=\frac{(b;q)_{\infty}(at;q^h)_{\infty}}{(c;q)_{\infty}(t;q^h)_{\infty}}
			\sum_{n=0}^{\infty}\frac{(c/b;q)_n(t;q^h)_{n}b^n}{(q;q)_n(at;q^h)_n}.
		\end{align}
	\end{lem}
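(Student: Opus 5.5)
We prove \eqref{Heineh} in the same way as Heine's classical transformation: expand the $b$-dependent ratio on the left as a $q$-binomial series, interchange the order of summation, and sum the resulting inner series by the $q$-binomial theorem. The basic tool is the $q$-binomial theorem
\[
\sum_{k=0}^{\infty}\frac{(a;q)_k}{(q;q)_k}z^k=\frac{(az;q)_\infty}{(z;q)_\infty},\qquad |z|<1,
\]
applied once with base $q$ and once with base $q^h$.

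\textbf{Step 1: expand the ratio in base $q$.} We first write
\[
\frac{(b;q)_{hn}}{(c;q)_{hn}}=\frac{(b;q)_\infty}{(c;q)_\infty}\cdot\frac{(cq^{hn};q)_\infty}{(bq^{hn};q)_\infty}.
\]
By the $q$-binomial theorem with parameter $c/b$ and argument $bq^{hn}$,
\[
\frac{(cq^{hn};q)_\infty}{(bq^{hn};q)_\infty}=\sum_{k=0}^\infty\frac{(c/b;q)_k}{(q;q)_k}\,b^kq^{hnk}.
\]
This is valid because $|bq^{hn}|\le|b|<1$.

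\textbf{Step 2: interchange the sums.} Substituting this into the left side of \eqref{Heineh} gives
\[
\frac{(b;q)_\infty}{(c;q)_\infty}\sum_{k\ge0}\frac{(c/b;q)_k b^k}{(q;q)_k}\sum_{n\ge0}\frac{(a;q^h)_n}{(q^h;q^h)_n}(tq^{hk})^n.
\]
The interchange is justified by absolute convergence. Since $|t|<1$ and $|b|<1$, the double series is dominated termwise by a product of two convergent series. Both $(a;q^h)_n/(q^h;q^h)_n$ and $(c/b;q)_k/(q;q)_k$ are bounded in absolute value by constants depending only on $a$, $c/b$ and $q$, because $|q|<1$ makes the infinite products converge.

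\textbf{Step 3: sum the inner series in base $q^h$.} The inner series is the $q^h$-binomial theorem with argument $tq^{hk}$, so
\[
\sum_{n\ge0}\frac{(a;q^h)_n}{(q^h;q^h)_n}(tq^{hk})^n=\frac{(atq^{hk};q^h)_\infty}{(tq^{hk};q^h)_\infty}=\frac{(at;q^h)_\infty}{(t;q^h)_\infty}\cdot\frac{(t;q^h)_k}{(at;q^h)_k}.
\]
Pulling the factor $(at;q^h)_\infty/(t;q^h)_\infty$ outside the $k$-sum yields exactly the right side of \eqref{Heineh}.

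The only delicate point is the justification of the interchange in Step 2. Everything else is bookkeeping with the identity $(x;q)_\infty=(x;q)_{N}(xq^{N};q)_\infty$, used in base $q$ with $N=hn$ and in base $q^h$ with $N=k$. We also assume parameters are generic, so that no denominator such as $(c;q)_{hn}$ or $(at;q^h)_k$ vanishes.
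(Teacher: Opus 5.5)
Your proof is correct and is the standard Heine-type argument: expand $(b;q)_{hn}/(c;q)_{hn}$ by the $q$-binomial theorem in base $q$, interchange the sums by absolute convergence, and resum the inner series by the $q$-binomial theorem in base $q^h$. This is the usual proof of Andrews and Berndt's Theorem~1.2.1, which the paper simply cites without proof; your one tacit assumption, $b\neq0$, is harmless, since $(c/b;q)_k b^k=\prod_{j=0}^{k-1}(b-cq^j)$ and both sides are continuous in $b$ near $0$.
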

	
	\begin{lem}[Jackson's transformation]\label{jackson-transformation}
		\cite[Appendix (III.4)]{Gasper-Rahman-2004}
		Let $b\neq0$ and $|z|<1$, and suppose that no denominator below
		vanishes. Then
		\begin{align*}
			{}_2\phi_1\left(\begin{array}{c}a,b\\ e\end{array};q^2,z\right)
			=\frac{(az;q^2)_\infty}{(z;q^2)_\infty}
			{}_2\phi_2\left(\begin{array}{c}a,e/b\\ e,az\end{array};q^2,bz\right).
		\end{align*}
	\end{lem}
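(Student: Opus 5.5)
Since $q$ appears only through the base, it suffices to prove the identity with $q^2$ replaced by a generic base, which I will write as $Q$ with $0<|Q|<1$. The plan is to expand the ratio $(b;Q)_n/(e;Q)_n$ occurring in the left-hand ${}_2\phi_1$ as a terminating series using the $q$-Chu--Vandermonde sum, interchange the two summations, and sum the inner series with the $q$-binomial theorem. I expect the infinite-product prefactor $(az;Q)_\infty/(z;Q)_\infty$ and the extra denominator parameter $az$ of the ${}_2\phi_2$ to come out of this last step.

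\textbf{Step 1 (Vandermonde expansion).} I would use the $q$-Chu--Vandermonde identity in the form
\[
\frac{(b;Q)_n}{(e;Q)_n}=\sum_{k=0}^{n}\frac{(Q^{-n},e/b;Q)_k}{(Q,e;Q)_k}\,(bQ^n)^k .
\]
This is the $n$-th coefficient identity obtained from Lemma~\ref{Heine} with $h=1$ for a terminating series, or equivalently the standard formula (II.6) of \cite{Gasper-Rahman-2004}. Substituting it into ${}_2\phi_1(a,b;e;Q,z)=\sum_n \frac{(a,b;Q)_n}{(Q,e;Q)_n}z^n$ gives a double sum over $0\le k\le n$.

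\textbf{Step 2 (reindexing).} Put $n=k+j$ and use
\[
\frac{(Q^{-n};Q)_k}{(Q;Q)_n}=\frac{(-1)^kQ^{\binom{k}{2}-nk}}{(Q;Q)_k(Q;Q)_j}.
\]
The factor $Q^{nk}$ coming from $(bQ^n)^k$ then cancels exactly, which leaves
\[
\sum_{k\ge0}\frac{(e/b;Q)_k(-1)^kQ^{\binom k2}b^k}{(Q,e;Q)_k}\;\sum_{j\ge0}\frac{(a;Q)_{k+j}}{(Q;Q)_j}z^{k+j}.
\]
Next write $(a;Q)_{k+j}=(a;Q)_k(aQ^k;Q)_j$. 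The $q$-binomial theorem then gives
\[
\sum_{j\ge0}\frac{(aQ^k;Q)_j}{(Q;Q)_j}z^j=\frac{(azQ^k;Q)_\infty}{(z;Q)_\infty}=\frac{(az;Q)_\infty}{(z;Q)_\infty\,(az;Q)_k}.
\]

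\textbf{Step 3 (assembly).} Inserting this, the outer sum becomes
\[
\frac{(az;Q)_\infty}{(z;Q)_\infty}\sum_{k}\frac{(a,e/b;Q)_k}{(Q,e,az;Q)_k}(-1)^kQ^{\binom k2}(bz)^k .
\]
This is precisely $\frac{(az;Q)_\infty}{(z;Q)_\infty}\,{}_2\phi_2(a,e/b;e,az;Q,bz)$ under the paper's definition, because for $r=s=2$ the factor $\bigl((-1)^kQ^{\binom k2}\bigr)^{1+s-r}$ is exactly $(-1)^kQ^{\binom k2}$.

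\textbf{Main obstacle.} The only delicate point is justifying the interchange of summations in Step~2. I would bound the double sum termwise in absolute value. The factor $Q^{\binom k2}$ gives super-geometric decay in $k$, the inner sum in $j$ is dominated by a convergent $q$-binomial series because $|z|<1$, and the remaining Pochhammer factors are uniformly bounded. Hence the double series converges absolutely and Fubini applies. The hypothesis $b\ne0$ is needed only so that $e/b$ makes sense, and the nonvanishing of denominators is assumed in the statement.
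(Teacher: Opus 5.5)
Your proof is correct. The paper gives no proof of its own for this lemma; it quotes \cite[Appendix (III.4)]{Gasper-Rahman-2004} and uses it only as a black box in Step~4 of the proof of Lemma~\ref{unified-identity}. Your argument is the standard textbook derivation: expand $(b;Q)_n/(e;Q)_n$ by $q$-Chu--Vandermonde, interchange the sums, and finish with the $q$-binomial theorem.

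Your domination bound for the interchange is adequate. The terms are bounded by a constant times $|Q|^{\binom k2}|b|^k|z|^{k+j}$, using $(e;Q)_\infty\neq0$ from the nonvanishing hypothesis.

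One slip needs fixing. With $j=n-k$, the reindexing identity should read $\frac{(Q^{-n};Q)_k}{(Q;Q)_n}=\frac{(-1)^kQ^{\binom k2-nk}}{(Q;Q)_j}$. The extra $(Q;Q)_k$ in your display is spurious: taken literally it would square the $(Q;Q)_k$ already supplied by the Vandermonde coefficient. The double sum you write immediately afterwards is nevertheless correct, so nothing downstream is affected.
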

	
	\begin{lem}[Finite $q$-binomial theorem]\label{finite-q-binomial}
		\cite{Gasper-Rahman-2004}
		For every nonnegative integer $r$ and every complex $a$,
		\begin{align*}
			(a;q^2)_r
			=\sum_{j=0}^{r}(-a)^jq^{j(j-1)}
			\genfrac{[}{]}{0pt}{}{r}{j}_{q^2},\qquad
			\genfrac{[}{]}{0pt}{}{r}{j}_{q^2}
			:=\frac{(q^2;q^2)_r}{(q^2;q^2)_j(q^2;q^2)_{r-j}}.
		\end{align*}
	\end{lem}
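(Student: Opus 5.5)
We argue by induction on $r$. This identity is purely finite and polynomial in $a$, so no convergence issues arise. The base case $r=0$ is immediate: both sides equal $1$, since the only term is $j=0$ and $\genfrac{[}{]}{0pt}{}{0}{0}_{q^2}=1$.

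For the inductive step we use the recurrence $(a;q^2)_{r+1}=(a;q^2)_r\,(1-aq^{2r})$ together with the $q^2$-Pascal rule
\[
\genfrac{[}{]}{0pt}{}{r+1}{j}_{q^2}=\genfrac{[}{]}{0pt}{}{r}{j}_{q^2}+q^{2(r+1-j)}\genfrac{[}{]}{0pt}{}{r}{j-1}_{q^2},
\]
with the convention that $\genfrac{[}{]}{0pt}{}{r}{j}_{q^2}=0$ for $j<0$ or $j>r$. This rule is checked directly from the definition: put both terms on the common denominator $(q^2;q^2)_j(q^2;q^2)_{r+1-j}$; the numerators then combine as $(1-q^{2(r+1-j)})+q^{2(r+1-j)}(1-q^{2j})=1-q^{2(r+1)}$.

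Assuming the expansion for $(a;q^2)_r$, multiply it by $(1-aq^{2r})$ and collect the coefficient of $(-a)^j$. The factor $1$ contributes $q^{j(j-1)}\genfrac{[}{]}{0pt}{}{r}{j}_{q^2}$. The factor $-aq^{2r}$ contributes $q^{2r}q^{(j-1)(j-2)}\genfrac{[}{]}{0pt}{}{r}{j-1}_{q^2}$, obtained by shifting $j\to j-1$.

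It remains to compare exponents. We have $2r+(j-1)(j-2)=j(j-1)+2(r+1-j)$, since both sides equal $j^2-3j+2+2r$. Hence the coefficient of $(-a)^j$ is
\[
q^{j(j-1)}\Bigl(\genfrac{[}{]}{0pt}{}{r}{j}_{q^2}+q^{2(r+1-j)}\genfrac{[}{]}{0pt}{}{r}{j-1}_{q^2}\Bigr)=q^{j(j-1)}\genfrac{[}{]}{0pt}{}{r+1}{j}_{q^2}
\]
by the Pascal rule, which completes the induction.

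The only step needing care is matching the exponent of $q$ in the shifted term with the Pascal factor $q^{2(r+1-j)}$. One must choose the version of the Pascal rule whose extra power multiplies $\genfrac{[}{]}{0pt}{}{r}{j-1}$ rather than $\genfrac{[}{]}{0pt}{}{r}{j}$; with that choice the verification is the one-line exponent identity above.

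Alternatively, one could cite the standard Cauchy/Rothe form of the $q$-binomial theorem in base $q$ and replace $q$ by $q^2$, which turns $q^{\binom{j}{2}}$ into $q^{j(j-1)}$. The induction is self-contained, however, and is the route I would write out.
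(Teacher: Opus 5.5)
Your induction is correct. The $q^2$-Pascal rule you use, with the extra power $q^{2(r+1-j)}$ on $\genfrac{[}{]}{0pt}{}{r}{j-1}_{q^2}$, is verified correctly from the definition, and the zero convention handles the edge cases $j=0$ and $j=r+1$. The exponent identity $2r+(j-1)(j-2)=j(j-1)+2(r+1-j)$ is exactly what closes the step.

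The paper gives no proof of this lemma. It simply quotes the result from Gasper--Rahman: the base-$q$ Cauchy form $(a;q)_r=\sum_{j}\genfrac{[}{]}{0pt}{}{r}{j}_q(-a)^jq^{\binom{j}{2}}$ with $q$ replaced by $q^2$. That is the alternative you mention at the end.

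The citation is the economical choice for a textbook identity. Your argument makes the paper self-contained and explicitly checks the normalization, namely $q^{j(j-1)}$ paired with the base-$q^2$ coefficient. The paper depends on that normalization in Step~4 of the proof of Lemma~\ref{unified-identity}, where it isolates the top term $j=n+1$ of \eqref{master-finite-binomial} as $-cq^{-2N}(-1)^n(ct)^nq^{n(n+1)}$.
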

	
	\begin{lem}\label{2.11}\cite[Appendix (III.9)]{Gasper-Rahman-2004} For $|de/abc|, |e/a|<1$,
		\begin{align}\label{3phi2}
			_3\phi_2\left(\begin{array}{c}
				a,b,c\\
				d,e
			\end{array}
			;q,\frac{de}{abc}\right)=\frac{(e/a;q)_{\infty}(de/bc;q)_{\infty}}{(e;q)_{\infty}(de/abc;q)_{\infty}} \ _3\phi_2\left(\begin{array}{c}
				a,d/b,d/c\\
				d,de/bc
			\end{array}
			;q,\frac{e}{a}\right).
		\end{align}
	\end{lem}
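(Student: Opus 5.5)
This is the classical Hall–Sears three-term-free $_3\phi_2$ transformation. I would prove it by writing one ratio of $q$-shifted factorials as a $q$-binomial series, interchanging summations, and applying Heine's transformation (Lemma~\ref{Heine} with $h=1$) to the inner $_2\phi_1$. A second application of Heine, or its Euler form, should then reassemble the right-hand side. Throughout, set $z=de/abc$.

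\emph{Step 1 (expansion).} For the $n$-th term I would use
\[
\frac{(c;q)_n}{(e;q)_n}=\frac{(c;q)_\infty}{(e;q)_\infty}\,\frac{(eq^n;q)_\infty}{(cq^n;q)_\infty}
=\frac{(c;q)_\infty}{(e;q)_\infty}\sum_{k\ge0}\frac{(e/c;q)_k}{(q;q)_k}c^kq^{nk},
\]
valid for $|c|<1$. Substituting this into the left side and interchanging sums gives a $k$-sum whose inner part is ${}_2\phi_1(a,b;d;q,zq^k)$.

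\emph{Step 2 (inner transformation).} I would transform each inner ${}_2\phi_1$ by the Heine/Euler form
\[
{}_2\phi_1(a,b;d;q,w)=\frac{(abw/d;q)_\infty}{(w;q)_\infty}\,{}_2\phi_1(d/a,d/b;d;q,abw/d).
\]
For the $k$-th term the argument $abw/d$ equals $eq^k/c$. The infinite-product prefactors then telescope into finite products $(\cdot;q)_k$ times a common $k$-independent infinite product.

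\emph{Step 3 (outer resummation).} Interchanging the order again produces an outer sum over $j$ with coefficients $(d/a,d/b;q)_j/(q,d;q)_j$. Its inner $k$-sum is a ${}_2\phi_1$ in $k$, which I would evaluate or transform by a further Heine step (or by $q$-Gauss, if its argument turns out balanced). If this choice of expansion does not give the target, I would instead expand $(b;q)_n/(d;q)_n$ in Step 1. The aim is for the $j$-coefficients to become $(a,d/b,d/c;q)_j/(q,d,de/bc;q)_j\,(e/a)^j$, with prefactor $(e/a,de/bc;q)_\infty/(e,de/abc;q)_\infty$.

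\emph{Main obstacle.} I expect the main difficulty to be choosing which ratio to expand, and which of Heine's three forms to apply at each stage, so that the parameters land exactly as $(a,d/b,d/c;d,de/bc)$ with argument $e/a$. I would first try the route via Lemma~\ref{Heine} with $h=1$, tracking the parameters symbolically.

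\emph{Convergence.} The interchanges of summation are justified by absolute convergence when $|c|$, $|z|$ and $|e/a|$ are small. The identity then extends to the stated region $|de/abc|,|e/a|<1$ by analytic continuation in the parameters, since both sides are analytic there away from poles.
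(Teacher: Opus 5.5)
The paper gives no proof of this lemma; it simply quotes (III.9) from Gasper--Rahman. Your outline (expand a ratio by the $q$-binomial theorem, apply Heine's third transformation to the inner ${}_2\phi_1$, interchange, resum) is the standard derivation, and every step you state is correct. As written, however, it stops before the decisive step, and the fallback you name would not resolve the mismatch you anticipate.

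After Step 2 your expression is
\[
\frac{(c,e/c;q)_\infty}{(e,z;q)_\infty}\sum_{j\ge0}\frac{(d/a,d/b;q)_j}{(q,d;q)_j}\Bigl(\frac{e}{c}\Bigr)^{j}\sum_{k\ge0}\frac{(z;q)_k}{(q;q)_k}\bigl(cq^{j}\bigr)^{k},\qquad z=\frac{de}{abc}.
\]
The inner $k$-sum is a ${}_1\phi_0$, so it is summed directly by the $q$-binomial theorem; no further Heine step or $q$-Gauss is needed. Since $cz=de/ab$, it equals $\dfrac{(de/ab;q)_\infty(c;q)_j}{(c;q)_\infty(de/ab;q)_j}$, and you arrive at
\[
{}_3\phi_2\left(\begin{array}{c}a,b,c\\ d,e\end{array};q,z\right)
=\frac{(e/c,de/ab;q)_\infty}{(e,de/abc;q)_\infty}\,
{}_3\phi_2\left(\begin{array}{c}c,d/a,d/b\\ d,de/ab\end{array};q,\frac{e}{c}\right).
\]
This is (III.9) with $a$ and $c$ interchanged. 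The missing observation is that the left side is symmetric in $a,b,c$, so relabelling finishes the proof.

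Alternatively, expand $(a;q)_n/(e;q)_n$ in Step 1. The same three moves then give the target verbatim, because the numerator parameter you expand survives on the right and its denominator partner produces the argument $e/a$. Your proposed switch to $(b;q)_n/(d;q)_n$ instead yields the version with $a\leftrightarrow b$, $d\leftrightarrow e$, so it too reaches the target only through the symmetry of the left side.

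Your convergence bookkeeping is also slightly off. Along your route you need $|c|<1$ (for the expansion and the final $q$-binomial sum), $|z|<1$, and $|e/c|<1$ (Heine's third form at $k=0$), not $|e/a|$ small. On the $(a,e)$ route the conditions are $|a|<1$, $|e/a|<1$, $|z|<1$. The only extra one, $|a|<1$, is removed cleanly by the scaling $(a,e)\mapsto(ta,te)$. This fixes both $e/a$ and $de/abc$, so both sides are meromorphic in the single variable $t$ and agree for small $|t|$, hence at $t=1$.

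With these fixes your argument is a complete, self-contained proof using only Lemma~\ref{Heine} with $h=1$ (iterated to get the third form) and the $q$-binomial theorem.
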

	
	\begin{lem}\label{lem2}\cite[p. 15]{Fine-1988}
		For $|\tau|<1$, and provided that no denominator vanishes, the
		Rogers-Fine identity is
		\begin{align}\label{RF}
			\sum^{\infty}_{n=0}\frac{(\alpha;q)_n\tau^n}{(\beta;q)_n}
			=\sum^{\infty}_{n=0}\frac{(\alpha;q)_n(\alpha\tau q/\beta;q)_n\beta^n\tau^nq^{n^2-n}(1-\alpha\tau q^{2n})}{(\beta;q)_n(\tau;q)_{n+1}}.
		\end{align}
	\end{lem}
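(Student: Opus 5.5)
The identity is classical (Fine's book proves it by iterating functional equations), but I would prove it directly by a $q$-difference iteration. Write
\[
F(\alpha,\beta;\tau):=\sum_{n\ge0}\frac{(\alpha;q)_n\tau^n}{(\beta;q)_n}.
\]
The idea is to find a first-order functional equation that peels off exactly the $n=0$ term of the right-hand side of \eqref{RF} and leaves an $F$ with shifted parameters. Iterating it $N$ times should reproduce the first $N$ terms of the right-hand side plus a remainder, and the remainder should tend to zero because of the factor $q^{n^2-n}$.

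\textbf{Step 1 (first relation).} Multiply $F$ by $1-\tau$ and telescope in $n$. This gives
\[
(1-\tau)F(\alpha,\beta;\tau)=1+\sum_{n\ge1}\frac{(\alpha;q)_{n-1}}{(\beta;q)_{n}}(\beta-\alpha)q^{n-1}\tau^n ,
\]
and hence
\[
F(\alpha,\beta;\tau)=\frac{1}{1-\tau}+\frac{(\beta-\alpha)\tau}{(1-\tau)(1-\beta)}F(\alpha,\beta q;\tau q).
\]

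\textbf{Step 2 (second relation).} A second relation, obtained by the same kind of telescoping but with the factor $(1-\alpha q^{n})$ split off instead, should return $F(\alpha,\beta q;\tau q)$ to the form $F(\alpha q,\beta q;\tau q)$ up to explicit prefactors. I would derive it equivalently from Lemma~\ref{Heine} with $h=1$, using $F(\alpha,\beta;\tau)={}_2\phi_1(q,\alpha;\beta;q,\tau)$. Combining the two relations, I aim for a one-step identity
\[
F(\alpha,\beta;\tau)=\frac{1-\alpha\tau}{1-\tau}+c(\alpha,\beta,\tau)\,F(\alpha q,\beta q;\tau q),
\]
where $c(\alpha,\beta,\tau)=\dfrac{(1-\alpha)(1-\alpha\tau q/\beta)\,\beta\tau}{(1-\beta)(1-\tau)(1-\tau q)}$. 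This $c$ is exactly the ratio that makes the $n=1$ term of \eqref{RF} equal $c$ times the $n=0$ term of the same series with $(\alpha,\beta,\tau)\mapsto(\alpha q,\beta q,\tau q)$. Note that the combination $\alpha\tau q/\beta$ is invariant under this substitution, and that the factor $q^{n^2-n}$ arises from the accumulating powers $\tau q^{k}$ and $\beta q^{k}$.

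\textbf{Step 3 (iterate and bound).} Iterating the one-step identity $N$ times expresses $F(\alpha,\beta;\tau)$ as the $N$-th partial sum of the right-hand side of \eqref{RF}, plus
\[
\Big(\prod_{k<N}c(\alpha q^k,\beta q^k,\tau q^k)\Big)\,F(\alpha q^N,\beta q^N;\tau q^N).
\]
The product has size $O(|\beta\tau|^N|q|^{N^2-N})$ times bounded factors. Meanwhile $F(\alpha q^N,\beta q^N;\tau q^N)\to 1$, since all its parameters tend to $0$. So the remainder vanishes and \eqref{RF} follows for $|\tau|<1$.

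\textbf{Main obstacle.} The delicate point is Step 2: getting the exact form of the second contiguous relation so that the combined coefficient matches the ratio of consecutive right-hand terms, including the factor $1-\alpha\tau q^{2n}$. I would first try to verify the one-step identity directly, by comparing coefficients of $\tau^n$ on both sides after clearing the denominator $(1-\tau)(1-\tau q)$. If that becomes messy, I would instead expand both sides of \eqref{RF} as power series in $\tau$ and check the resulting finite $q$-binomial identity using Lemma~\ref{finite-q-binomial}.
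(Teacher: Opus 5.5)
\textbf{There is a genuine error in Step~2.} Your overall plan is sound: peel off one term of \eqref{RF} with a first-order functional equation, iterate, and let $q^{N^2-N}$ kill the remainder. Step~1 is also correct. The paper gives no argument here and only cites Fine, so everything rests on your Step~2, and the identity you aim for there is false.

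Your $c$ carries a spurious factor $(1-\tau q)^{-1}$. The $n=1$ term of \eqref{RF} is
\[
\frac{(1-\alpha)(\beta-\alpha\tau q)\tau\,(1-\alpha\tau q^{2})}{(1-\beta)(1-\tau)(1-\tau q)}.
\]
The shifted $n=0$ term $(1-\alpha\tau q^{2})/(1-\tau q)$ already contains $1-\tau q$, so the correct ratio is $\frac{(1-\alpha)(\beta-\alpha\tau q)\tau}{(1-\beta)(1-\tau)}$. With your $c$, the coefficient of $\tau^2$ on the right exceeds $(\alpha;q)_2/(\beta;q)_2$ by $(1-\alpha)\beta q/(1-\beta)$. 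Iterating would leave an extra factor $1/(\tau q;q)_n$ in the $n$-th term.

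Your remark that $\alpha\tau q/\beta$ is invariant under the shift is also wrong. The shift multiplies it by $q$, and that is exactly why $(\alpha\tau q/\beta;q)_n=(1-\alpha\tau q/\beta)(\alpha\tau q^{2}/\beta;q)_{n-1}$ releases a constant factor. If it were invariant, the consecutive ratios would depend on $n$. As written, the one step that carries all the content is both unverified and incorrect.

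\textbf{The repair is short and needs no Heine transformation.} The second relation you had in mind, obtained by splitting off $1-\alpha q^{n}$, is
\[
(\beta-\alpha)F(\alpha,\beta q;\tau q)=-\alpha(1-\beta)+(1-\alpha)(\beta-\alpha\tau q)F(\alpha q,\beta q;\tau q).
\]
You can check it coefficientwise: for every $n\ge0$, both sides have $\tau^n$-coefficient $(\beta-\alpha)(\alpha;q)_nq^n/(\beta q;q)_n$. Substituting it into Step~1 gives
\[
F(\alpha,\beta;\tau)=\frac{1-\alpha\tau}{1-\tau}+\frac{(1-\alpha)(\beta-\alpha\tau q)\tau}{(1-\beta)(1-\tau)}\,F(\alpha q,\beta q;\tau q).
\]
With this corrected $c$,
\[
\prod_{k<n}c(\alpha q^k,\beta q^k,\tau q^k)=\frac{(\alpha;q)_n(\alpha\tau q/\beta;q)_n\beta^n\tau^nq^{n^2-n}}{(\beta;q)_n(\tau;q)_n},
\]
where $(\alpha\tau q/\beta;q)_n\beta^n$ means $\prod_{k<n}(\beta-\alpha\tau q^{k+1})$. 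Multiplying by the shifted leading term $(1-\alpha\tau q^{2n})/(1-\tau q^{n})$ gives exactly the $n$-th term of \eqref{RF}, so your Step~3 goes through unchanged.

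Because the coefficient involves $\beta-\alpha\tau q$ rather than a division by $\beta$, the same argument works verbatim at $\beta=0$. It would therefore give Lemma~\ref{RF-zero} directly, without the limiting argument used in the paper.
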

	
	We record explicitly the limiting case of \eqref{RF} that will be used
	in the proof of Theorem~\ref{partial-thm1}.
	
	\begin{lem}\label{RF-zero}
		For $|\tau|<1$,
		\begin{align}
			\sum_{n=0}^{\infty}(\alpha;q^2)_n\tau^n
			&=\sum_{n=0}^{\infty}
			\frac{(\alpha;q^2)_n(-\alpha)^n\tau^{2n}q^{3n^2-n}}
			{(\tau;q^2)_{n+1}}
			(1-\alpha\tau q^{4n}).
			\label{RF-zero-identity}
		\end{align}
	\end{lem}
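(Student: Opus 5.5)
The identity \eqref{RF-zero-identity} is the $\beta\to0$ limit of the Rogers--Fine identity (Lemma~\ref{lem2}) after replacing $q$ by $q^2$. Replacing $q$ by $q^2$ in \eqref{RF} gives, for $|\tau|<1$,
\[
\sum_{n=0}^{\infty}\frac{(\alpha;q^2)_n\tau^n}{(\beta;q^2)_n}
=\sum_{n=0}^{\infty}\frac{(\alpha;q^2)_n(\alpha\tau q^2/\beta;q^2)_n\beta^n\tau^nq^{2n^2-2n}(1-\alpha\tau q^{4n})}{(\beta;q^2)_n(\tau;q^2)_{n+1}} .
\]
The plan is to let $\beta\to0$ on both sides and identify the limits termwise.

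The key algebraic step handles the only factor where $\beta$ appears in a singular way. We have
\[
(\alpha\tau q^2/\beta;q^2)_n\beta^n=\prod_{k=0}^{n-1}\bigl(\beta-\alpha\tau q^{2k+2}\bigr),
\]
which is a polynomial in $\beta$. At $\beta=0$ it equals $(-\alpha\tau)^nq^{n(n+1)}$. Also $(\beta;q^2)_n\to1$. Hence the $n$-th term on the right tends to
\[
\frac{(\alpha;q^2)_n(-\alpha)^n\tau^{2n}q^{2n^2-2n+n^2+n}(1-\alpha\tau q^{4n})}{(\tau;q^2)_{n+1}} ,
\]
and the exponent $2n^2-2n+n^2+n$ equals $3n^2-n$, as claimed. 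On the left, the $n$-th term tends to $(\alpha;q^2)_n\tau^n$.

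The only real work is justifying the interchange of the limit with the infinite sums. I would restrict $\beta$ to a small closed disk $|\beta|\le\delta$ with $\delta<1$, so that $|(\beta;q^2)_n|^{-1}$ is bounded uniformly in $n$ and $\beta$ (it is bounded by $1/(\delta;|q|^2)_\infty$-type constants). Bounds for the other factors follow in the same way:
\begin{itemize}
\item $|(\alpha;q^2)_n|\le(-|\alpha|;|q|^2)_\infty$;
\item $|\prod_{k<n}(\beta-\alpha\tau q^{2k+2})|\le\prod_{k<n}(\delta+|\alpha\tau||q|^{2k+2})\le C(\delta+|\alpha\tau q^2|)^n$;
\item $1/|(\tau;q^2)_{n+1}|$ is bounded uniformly in $n$.
\end{itemize}
Together with the factor $q^{2n^2-2n}$, these bounds give a summable majorant of the form $C'A^n|q|^{2n^2}$, independent of $\beta$. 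The left side is dominated by $C|\tau|^n$ with $|\tau|<1$. Weierstrass' $M$-test (or dominated convergence) then shows both series are continuous in $\beta$ at $0$, so the limiting identity holds. For $\beta\neq0$ we must avoid the values $\beta=q^{-2k}$, but these lie outside the small disk, so no denominator vanishes there.

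I expect no genuine obstacle. The only point needing care is this uniform majorant for the right-hand series, where the $\beta^n$ must be absorbed into the Pochhammer product before estimating.
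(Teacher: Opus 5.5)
Your proposal is correct and follows the paper's proof essentially step for step. Both use the $q\to q^2$ case of Rogers--Fine, rewrite $(\alpha\tau q^2/\beta;q^2)_n\beta^n$ as the polynomial $\prod_{k<n}(\beta-\alpha\tau q^{2k+2})$, and use uniform majorants on a disk $|\beta|\le r<1$ (the bound $(r+|\alpha\tau||q|^2)^n$, the factor $|q|^{2n^2-2n}$, and $|\tau|^n$ on the left) to justify termwise passage to $\beta=0$, with the exponent check $n(n+1)+2n^2-2n=3n^2-n$.
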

	
	\begin{proof}
		Replace $q$ by $q^2$ in \eqref{RF} and initially take $\beta\neq0$.
		For every fixed $n$,
		\begin{align}
			(\alpha\tau q^2/\beta;q^2)_n\beta^n
			&=\prod_{k=0}^{n-1}(\beta-\alpha\tau q^{2k+2}),
			\label{RF-zero-product}
		\end{align}
		and hence
		\begin{align*}
			\lim_{\beta\to0}
			(\alpha\tau q^2/\beta;q^2)_n\beta^n
			&=(-\alpha\tau)^nq^{n(n+1)}.
		\end{align*}
		Moreover, this limit may be passed through both series. Indeed, choose
		$0<r<1$. For $|\beta|\leq r$,
		\begin{align*}
			|(\beta;q^2)_n|
			&\geq \prod_{k=0}^{\infty}(1-r|q|^{2k})>0,\\
			|(\alpha\tau q^2/\beta;q^2)_n\beta^n|
			&\leq (r+|\alpha\tau||q|^2)^n.
		\end{align*}
		The remaining finite products are bounded uniformly in $n$, while the
		factor $|q|^{2n^2-2n}$ on the right-hand side of \eqref{RF} gives a
		summable majorant independent of $\beta$. The series on the left-hand
		side is uniformly dominated by a constant multiple of $|\tau|^n$.
		Thus both series converge uniformly for $|\beta|\leq r$, and termwise
		passage to the limit is justified. In particular, after the product in
		\eqref{RF-zero-product} is used to remove the apparent singularity, the
		right-hand side has a continuous extension to $\beta=0$. Finally, using
		$(\beta;q^2)_n\to1$ and
		\[
		n(n+1)+2n^2-2n=3n^2-n
		\]
		gives \eqref{RF-zero-identity}.
	\end{proof}
	
	The following specialized identity is the common starting point for all
	five main theorems.  We give it a direct $q$-difference proof without
	appealing to Andrews' derivation.
	
	\begin{lem}\label{unified-identity}
		Let $m\geq0$ and $x\neq0$.  Suppose that none of the denominators below
		vanishes, $|xq^m|<1$, and, when $C\neq0$, that
		$|Cxq^{-m}|<1$.  Then
		\begin{align}
			&\sum_{n=0}^{\infty}
			\frac{(B,C;q^2)_nq^{2n}}
			{(-xq^{m+2},-x^{-1}q^{m+2};q^2)_n}
			\nonumber\\
			&=\frac{-xq^{-m}(B,C;q^2)_\infty}
			{(-xq^{m+2},-x^{-1}q^{m+2};q^2)_\infty}
			\sum_{n=0}^{\infty}
			\frac{(-xq^{m+2}/C;q^2)_n(-Cxq^{-m})^n}
			{(-Bxq^{-m};q^2)_{n+1}}
			\nonumber\\
			&\quad +(1+xq^m)\sum_{n=0}^{\infty}
			\frac{(-xq^{-m};q^2)_{n+1}(BCq^{-2m};q^2)_n(-xq^m)^n}
			{(-Bxq^{-m},-Cxq^{-m};q^2)_{n+1}}.
			\label{master-identity}
		\end{align}
		For $C=0$, the first series on the right-hand side is understood as
		its limit as $C\to0$.
	\end{lem}
	
	
	\begin{proof}
		We first derive a common recurrence and then iterate the resulting
		error.  The only delicate point is the boundary estimate, where two
		terms of order $q^{-2N}$ cancel.
		
		\smallskip
		\noindent\emph{Step 1: the first two recurrences.}
		Denote the left-hand side of \eqref{master-identity} by
		$\mathcal F_m(B,C)$ and the two terms on its right-hand side by
		$\mathcal U_m(B,C)$ and $\mathcal V_m(B,C)$, respectively.  Put
		$\mathcal R_m(B,C)=\mathcal U_m(B,C)+\mathcal V_m(B,C)$.
		Separating the term $n=0$ and then replacing $n$ by $n+1$ gives
		\begin{align}
			\mathcal F_m(B,C)
			&=1+\lambda_m(B,C)\mathcal F_{m+2}(Bq^2,Cq^2),
			\label{master-F-recurrence}\\
			\lambda_m(B,C)
			&:=\frac{q^2(1-B)(1-C)}
			{(1+xq^{m+2})(1+x^{-1}q^{m+2})}.
			\label{master-lambda}
		\end{align}
		Directly from the infinite products and the first series on the
		right-hand side of \eqref{master-identity}, we also have
		\begin{align}
			\mathcal U_m(B,C)
			=\lambda_m(B,C)\mathcal U_{m+2}(Bq^2,Cq^2).
			\label{master-U-recurrence}
		\end{align}
		
		\smallskip
		\noindent\emph{Step 2: the recurrence for the second term on the
			right-hand side.}
		We next treat $\mathcal V_m(B,C)$.  Set
		\begin{align}
			c=-xq^{-m},\qquad t=-xq^m,\qquad
			d=BCq^{-2m}=\frac{BCc}{t},
			\label{master-ctd}
		\end{align}
		and define
		\begin{align}
			A_n=\frac{(c,d;q^2)_nt^n}{(Bc,Cc;q^2)_n}.
			\label{master-An}
		\end{align}
		From the definition of $A_n$,
		\begin{align*}
			A_{n+1}=A_n
			\frac{t(1-cq^{2n})(1-dq^{2n})}
			{(1-Bcq^{2n})(1-Ccq^{2n})}.
		\end{align*}
		Since $td=BCc$, a direct expansion gives
		\begin{align*}
			&(1-Bcq^{2n})(1-Ccq^{2n})
			-t(1-cq^{2n})(1-dq^{2n})\\
			&\qquad=(1-t)(1-cq^{2n})
			+c(1-B)(1-C)q^{2n}.
		\end{align*}
		Combining these two relations gives
		\begin{align}
			A_n-A_{n+1}
			=(1-t)\frac{(c;q^2)_{n+1}(d;q^2)_nt^n}
			{(Bc,Cc;q^2)_{n+1}}
			+c(1-B)(1-C)
			\frac{(c,d;q^2)_n(tq^2)^n}
			{(Bc,Cc;q^2)_{n+1}}.
			\label{master-telescoping}
		\end{align}
		The sum of the first term on the right-hand side of
		\eqref{master-telescoping} is $\mathcal V_m(B,C)$, since
		\begin{align*}
			\mathcal V_m(B,C)
			=(1-t)\sum_{n=0}^{\infty}
			\frac{(c;q^2)_{n+1}(d;q^2)_nt^n}
			{(Bc,Cc;q^2)_{n+1}}.
		\end{align*}
		For the second term, put
		\begin{align*}
			S_m(B,C):=\sum_{n=0}^{\infty}
			\frac{(c,d;q^2)_n(tq^2)^n}
			{(Bc,Cc;q^2)_{n+1}}.
		\end{align*}
		Upon replacing $m,B,C$ by $m+2,Bq^2,Cq^2$, respectively, the three
		quantities $c,t,d$ defined in \eqref{master-ctd} become
		$cq^{-2},tq^2,d$. We consequently obtain
		\begin{align*}
			\mathcal V_{m+2}(Bq^2,Cq^2)
			=(1-tq^2)(1-cq^{-2})S_m(B,C).
		\end{align*}
		Moreover, $1+xq^{m+2}=1-tq^2$ and
		\[
		1+x^{-1}q^{m+2}=1-\frac{q^2}{c}
		=-\frac{q^2}{c}(1-cq^{-2}).
		\]
		Hence, by \eqref{master-lambda}, we obtain
		\begin{align*}
			c(1-B)(1-C)S_m(B,C)
			=-\lambda_m(B,C)\mathcal V_{m+2}(Bq^2,Cq^2).
		\end{align*}
		Finally, $A_0=1$, while $A_n\to0$ because $|t|<1$ and the quotient
		of the finite products in \eqref{master-An} remains bounded.  Summing
		\eqref{master-telescoping} over $n\geq0$ therefore gives
		\begin{align*}
			1=\mathcal V_m(B,C)
			-\lambda_m(B,C)\mathcal V_{m+2}(Bq^2,Cq^2),
		\end{align*}
		which is equivalent to
		\begin{align}
			\mathcal V_m(B,C)
			=1+\lambda_m(B,C)\mathcal V_{m+2}(Bq^2,Cq^2).
			\label{master-V-recurrence}
		\end{align}
		It follows from \eqref{master-U-recurrence} and
		\eqref{master-V-recurrence} that
		\begin{align}
			\mathcal R_m(B,C)
			=1+\lambda_m(B,C)\mathcal R_{m+2}(Bq^2,Cq^2).
			\label{master-R-recurrence}
		\end{align}
		
		\smallskip
		\noindent\emph{Step 3: iteration of the error.}
		Define
		\[
		\mathcal E_m(B,C):=\mathcal F_m(B,C)-\mathcal R_m(B,C).
		\]
		Subtracting \eqref{master-R-recurrence} from
		\eqref{master-F-recurrence}, we obtain the homogeneous recurrence
		\begin{align}
			\mathcal E_m(B,C)
			&=\lambda_m(B,C)
			\mathcal E_{m+2}(Bq^2,Cq^2).
			\label{master-E-recurrence}
		\end{align}
		Applying \eqref{master-E-recurrence} successively gives, for every
		positive integer $N$,
		\begin{align}
			\mathcal E_m(B,C)
			&=\prod_{j=0}^{N-1}
			\lambda_{m+2j}(Bq^{2j},Cq^{2j})
			\mathcal E_{m+2N}(Bq^{2N},Cq^{2N}).
			\label{master-E-product}
		\end{align}
		By \eqref{master-lambda},
		\begin{align*}
			\lambda_{m+2j}(Bq^{2j},Cq^{2j})
			&=\frac{q^2(1-Bq^{2j})(1-Cq^{2j})}
			{(1+xq^{m+2j+2})(1+x^{-1}q^{m+2j+2})}.
		\end{align*}
		Consequently, \eqref{master-E-product} becomes
		\begin{align}
			\mathcal E_m(B,C)
			=\frac{q^{2N}(B,C;q^2)_N}
			{(-xq^{m+2},-x^{-1}q^{m+2};q^2)_N}
			\mathcal E_{m+2N}(Bq^{2N},Cq^{2N}).
			\label{master-E-iteration}
		\end{align}
		Since $|q|<1$, the numerator products in the quotient above remain
		bounded, while the denominator products converge to nonzero limits under
		the hypotheses of the lemma. Hence there is a constant $K>0$, independent
		of $N$, such that
		\[
		\left|
		\frac{(B,C;q^2)_N}
		{(-xq^{m+2},-x^{-1}q^{m+2};q^2)_N}
		\right|\leq K.
		\]
		Thus the coefficient in \eqref{master-E-iteration} is
		$O(q^{2N})$ and tends to zero. It remains to prove that the last factor
		in \eqref{master-E-iteration} is bounded uniformly in $N$.
		
		\smallskip
		\noindent\emph{Step 4: the boundary estimate and the cancellation.}
		The sole purpose of this step is to show that
		$\mathcal E_{m+2N}(Bq^{2N},Cq^{2N})$ remains bounded as $N\to\infty$.
		The two parts of $\mathcal R_{m+2N}$ need not be bounded separately;
		we shall compute only their leading terms and show that these terms cancel.
		Assume first that $C\neq0$ and retain the notation in
		\eqref{master-ctd}.  Put
		\begin{align*}
			S&=\sum_{n=0}^{\infty}
			\frac{(tq^2/C;q^2)_n(Cc)^n}{(Bc;q^2)_{n+1}}.
		\end{align*}
		For every fixed $a$, we have
		$(aq^{2N};q^2)_\infty=1+O(q^{2N})$. Hence the definition of
		$\mathcal U_m$ gives
		\begin{align}
			\mathcal U_{m+2N}(Bq^{2N},Cq^{2N})
			&=cq^{-2N}
			\frac{(Bq^{2N},Cq^{2N};q^2)_\infty}
			{(tq^{2N+2},q^{2N+2}/c;q^2)_\infty}S \notag\\
			&=cq^{-2N}S+O(1).
			\label{master-U-asymptotic}
		\end{align}
		We next show that $\mathcal V_{m+2N}$ has the opposite leading term.
		Applying Lemma~\ref{jackson-transformation} with
		$a=q^2$, $b=tq^2/C$, $e=Bcq^2$, and $z=Cc$, we obtain
		\begin{align}
			S=\sum_{n=0}^{\infty}
			\frac{(-1)^n(ct)^nq^{n(n+1)}(d;q^2)_n}
			{(Bc,Cc;q^2)_{n+1}}.
			\label{master-S-transform}
		\end{align}
		Here $b\neq0$ and $|z|=|Cxq^{-m}|<1$; the denominator conditions
		also follow from the hypotheses of Lemma~\ref{unified-identity}.
		
		On the other hand,
		\begin{align}
			\mathcal V_{m+2N}(Bq^{2N},Cq^{2N})
			=(1-tq^{2N})\sum_{n=0}^{\infty}
			\frac{(cq^{-2N};q^2)_{n+1}(d;q^2)_n(tq^{2N})^n}
			{(Bc,Cc;q^2)_{n+1}}.
			\label{master-V-limit}
		\end{align}
		Applying Lemma~\ref{finite-q-binomial} with $a=cq^{-2N}$ and
		$r=n+1$, and multiplying by $(tq^{2N})^n$, gives
		\begin{align}
			(cq^{-2N};q^2)_{n+1}(tq^{2N})^n
			=\sum_{j=0}^{n+1}(-1)^jc^jt^nq^{2N(n-j)}
			q^{j(j-1)}\genfrac{[}{]}{0pt}{}{n+1}{j}_{q^2}.
			\label{master-finite-binomial}
		\end{align}
		The term $j=n+1$ in \eqref{master-finite-binomial} is
		\begin{align*}
			-cq^{-2N}(-1)^n(ct)^nq^{n(n+1)}.
		\end{align*}
		By \eqref{master-S-transform}, its contribution to
		\eqref{master-V-limit} is
		\[
		-cq^{-2N}(1-tq^{2N})S=-cq^{-2N}S+O(1).
		\]
		All the other terms contribute only $O(1)$. The factors
		$(d;q^2)_n/(Bc,Cc;q^2)_{n+1}$ and
		$\genfrac{[}{]}{0pt}{}{n+1}{j}_{q^2}$ are uniformly bounded for
		$0\le j\le n$, as is $1-tq^{2N}$. Setting $n=j+k$ thus bounds
		each remaining summand in modulus by a constant independent of $N,j,k$
		times $|ct|^j|q|^{j(j-1)}|tq^{2N}|^k$.
		Since $|t|<1$,
		\[
		\sum_{j,k\ge0}|ct|^j|q|^{j(j-1)}|tq^{2N}|^k
		\le\frac{1}{1-|t|}\sum_{j\ge0}|ct|^j|q|^{j(j-1)}<\infty,
		\]
		uniformly in $N$; the last series converges by the ratio test.
		Therefore
		\begin{align}
			\mathcal V_{m+2N}(Bq^{2N},Cq^{2N})
			=-cq^{-2N}S+O(1).
			\label{master-V-asymptotic}
		\end{align}
		The two leading terms now cancel. By \eqref{master-U-asymptotic} and
		\eqref{master-V-asymptotic},
		\begin{align}
			\mathcal R_{m+2N}(Bq^{2N},Cq^{2N})
			=O(1).
			\label{master-R-bound}
		\end{align}
		The defining series shows that the corresponding $\mathcal F$-term is
		uniformly bounded. Together with \eqref{master-R-bound}, this implies that
		$\mathcal E_{m+2N}(Bq^{2N},Cq^{2N})$ is bounded uniformly in $N$; say
		\[
		\left|\mathcal E_{m+2N}(Bq^{2N},Cq^{2N})\right|\leq M
		\]
		for all sufficiently large $N$. Combining this estimate with
		\eqref{master-E-iteration} and the constant $K$ from Step~3 yields
		\[
		|\mathcal E_m(B,C)|\leq KM|q|^{2N}.
		\]
		The left-hand side is independent of $N$, whereas the right-hand side
		tends to zero. Therefore $\mathcal E_m(B,C)=0$, or equivalently
		$\mathcal F_m(B,C)=\mathcal R_m(B,C)$. This proves
		\eqref{master-identity} for $C\neq0$.
		
		\smallskip
		\noindent\emph{The case $C=0$.}
		Finally,
		\begin{align}
			\lim_{C\to0}(-xq^{m+2}/C;q^2)_n(-Cxq^{-m})^n
			=(-1)^nx^{2n}q^{n^2+n}.
			\label{master-C-zero}
		\end{align}
		The product equals $\prod_{r=0}^{n-1}(-Cxq^{-m}-x^2q^{2+2r})$,
		whose factors have modulus at most $1/2$ for all large $r$, uniformly
		for small $|C|$. Together with the denominator products being uniformly
		bounded away from zero and $|q|,|t|<1$, this gives uniform geometric
		majorants for all three series near $C=0$. The infinite-product prefactor
		is also continuous there, so passage to the limit proves
		\eqref{master-identity} for $C=0$.
	\end{proof}
	
	\section{Proofs of the main theorems}\label{sec-proofs}
	We first work in a nonempty open subset where all intermediate series
	converge absolutely. Since the final expressions are meromorphic in $x$,
	the identities extend to the stated domains, away from poles, by analytic
	continuation.

	\begin{proof}[Proof of Theorem~\ref{partial-thm1}]
		Taking $B=0$ and $C=0$ in Lemma~\ref{unified-identity}, and using \eqref{master-C-zero}, we have
		\begin{align}\label{partial-thm1-1}
			\sum^{\infty}_{n=0}\frac{q^{2n}}{(-xq^{m+2},-x^{-1}q^{m+2};q^2)_{n}}
			&=\frac{-xq^{-m}}{(-xq^{m+2},-x^{-1}q^{m+2};q^2)_{\infty}}
			\sum^{\infty}_{n=0}(-1)^nx^{2n}q^{n^2+n}\nonumber\\
			&\quad +(1+xq^m)\sum^{\infty}_{n=0}(-xq^{-m};q^2)_{n+1}(-xq^m)^n\nonumber\\
			&=\frac{-xq^{-m}}{(-xq^{m+2},-x^{-1}q^{m+2};q^2)_{\infty}}
			\sum^{\infty}_{n=0}(-1)^nx^{2n}q^{n^2+n}\nonumber\\
			&\quad +(1+xq^m)\left(\sum^{m-1}_{n=0}+\sum_{n=m}^{\infty}\right)(-xq^{-m};q^2)_{n+1}(-xq^m)^n.
		\end{align}
		For the tail, the change of index $n\mapsto n+m$ gives
		\begin{align}\label{partial-thm1-2}
			&\sum_{n=m}^{\infty}(-xq^{-m};q^2)_{n+1}(-xq^m)^n\nonumber\\*
			&=(-xq^{-m};q^2)_{m+1}(-xq^m)^{m}\sum_{n=0}^{\infty}(-xq^{m+2};q^2)_{n}(-xq^m)^{n}\nonumber\\
			&=(-xq^{-m};q^2)_{m}(-xq^m)^{m}\sum^{\infty}_{n=0}x^{3n}q^{3n^2+(3m+1)n}(1-x^2q^{4n+2m+2}),
		\end{align}
		where the last equality follows from \eqref{RF-zero-identity}
		with $\alpha=-xq^{m+2}$ and $\tau=-xq^m$.
		Substituting \eqref{partial-thm1-2} into
		\eqref{partial-thm1-1} and multiplying by $q^m$
		proves the result.
	\end{proof}
	
	\begin{proof}[Proof of Theorem~\ref{partial-thm2}]
		Taking $B=q$ and $C=0$ in Lemma~\ref{unified-identity}, and using \eqref{master-C-zero}, we deduce
		\begin{align}\label{partial-thm2-1}
			&\sum^{\infty}_{n=0}\frac{(q;q^2)_nq^{2n}}{(-xq^{m+2},-x^{-1}q^{m+2};q^2)_{n}} \nonumber\\
			&=\frac{-xq^{-m}(q;q^2)_{\infty}}{(-xq^{m+2},-x^{-1}q^{m+2};q^2)_{\infty}}\sum^{\infty}_{n=0}\frac{(-1)^nx^{2n}q^{n^2+n}}
			{(-xq^{-m+1};q^2)_{n+1}}\nonumber\\
			&\quad +(1+xq^m)\sum^{\infty}_{n=0}\frac{(-xq^{-m};q^2)_{n+1}(-xq^m)^n}{(-xq^{-m+1};q^2)_{n+1}} \nonumber\\
			&=\frac{-xq^{-m}(q;q^2)_{\infty}}{(-xq^{m+2},-x^{-1}q^{m+2};q^2)_{\infty}}\left(\sum^{m-1}_{n=0}
			+\sum^{\infty}_{n=m}\right)\frac{(-1)^nx^{2n}q^{n^2+n}}{(-xq^{-m+1};q^2)_{n+1}}\nonumber\\
			&\quad +(1+xq^m)\left(\sum^{m-1}_{n=0}
			+\sum^{\infty}_{n=m}\right)\frac{(-xq^{-m};q^2)_{n+1}(-xq^m)^n}{(-xq^{-m+1};q^2)_{n+1}}.
		\end{align}
		Observe that
		\begin{align}\label{partial-thm2-2}
			\sum^{\infty}_{n=m}\frac{(-1)^nx^{2n}q^{n^2+n}}{(-xq^{-m+1};q^2)_{n+1}}\nonumber
			&=\frac{(-1)^{m}x^{2m}q^{m^2+m}}{(-xq^{-m+1};q^2)_{m+1}}
			\sum^{\infty}_{n=0}\frac{(-1)^{n}x^{2n}q^{n^2+(2m+1)n}}{(-xq^{m+3};q^2)_n}\nonumber\\
			&=\frac{(-1)^{m}x^{2m}q^{m^2+m}}{(-xq^{-m+1};q^2)_{m}}\sum^{\infty}_{n=0}x^{3n}q^{3n^2+(3m+2)n}(1-xq^{2n+m+1}),
		\end{align}
		where the last step follows from \eqref{RF} with $q\mapsto q^2$,
		$\alpha=x^2q^{2m+2}/\tau$, $\beta=-xq^{m+3}$, and $\tau\to0$.
		Similarly,
		\begin{align}\label{partial-thm2-5}
			&\sum^{\infty}_{n=m}\frac{(-xq^{-m};q^2)_{n+1}(-xq^m)^n}{(-xq^{-m+1};q^2)_{n+1}}\nonumber\\
			&=\frac{(-xq^{-m};q^2)_{m+1}(-xq^m)^{m}}{(-xq^{-m+1};q^2)_{m+1}}
			\sum^{\infty}_{n=0}\frac{(-xq^{m+2};q^2)_{n}(-xq^m)^n}{(-xq^{m+3};q^2)_{n}}\nonumber\\
			&=\frac{(-xq^{-m};q^2)_{m}(-xq^m)^{m}}{(-xq^{-m+1};q^2)_{m}}\sum^{\infty}_{n=0}x^{2n}q^{2n^2+(2m+1)n}(1-xq^{2n+m+1}),
		\end{align}
		where the last equality follows from \eqref{RF} with $q\mapsto q^2$,
		$\alpha=-xq^{m+2}$, $\beta=-xq^{m+3}$, and $\tau=-xq^m$.
		Substituting \eqref{partial-thm2-2} and \eqref{partial-thm2-5} into
		\eqref{partial-thm2-1}, and multiplying by $q^m$, proves the result.
	\end{proof}

	\begin{proof}[Proof of Theorem~\ref{partial-thm3}]
		Taking $B=-1$ and $C=-q$ in Lemma~\ref{unified-identity}, we obtain
		\begin{equation}\label{thm3-initial}
			\sum^{\infty}_{n=0}\frac{(-1;q)_{2n}q^{2n}}{(-xq^{m+2},-x^{-1}q^{m+2};q^2)_{n}}=P_3+Q_3,
		\end{equation}
		where
		\begin{align}
			P_3&:=\frac{-xq^{-m}(-1;q)_{\infty}}{(-xq^{m+2},-x^{-1}q^{m+2};q^2)_{\infty}}
			\sum^{\infty}_{n=0}\frac{(xq^{m+1};q^2)_{n}(x q^{1-m})^n}{(xq^{-m};q^2)_{n+1}},\label{P3}\\
			Q_3&:=(1+xq^{m})(1+xq^{-m})\sum^{\infty}_{n=0}\frac{(q^{-2m+1},-xq^{-m+2};q^2)_{n}(-xq^m)^{n}}{(xq^{-m};q)_{2n+2}}.\nonumber
		\end{align}
		
		Set $q\mapsto q^{2}$, $\alpha=xq^{m+1}$,
		$\beta=xq^{-m+2}$, and $\tau=xq^{1-m}$ in \eqref{RF}.
		The remaining finite products simplify as follows:
		\[
		\frac{(xq^{m+1};q^2)_n(xq^{m+2};q^2)_n}
		{(1-xq^{-m})(xq^{-m+2};q^2)_n(xq^{1-m};q^2)_{n+1}}
		=\frac{(xq^{2n-m+2};q)_{2m-1}}{(xq^{-m};q)_{2m+1}}.
		\]
		The last equality follows by splitting the same product
		$(xq^{-m};q)_{2n+2m+1}$ in two ways.
		Consequently, for $|xq^{1-m}|<1$, we have
		\begin{align*}
			\sum^{\infty}_{n=0}\frac{(xq^{m+1};q^2)_{n}(x q^{1-m})^n}{(xq^{-m};q^2)_{n+1}}
			&=\frac{1}{(xq^{-m};q)_{2m+1}}\\
			&\quad \times\sum^{\infty}_{n=0}(xq^{2n-m+2};q)_{2m-1}x^{2n}q^{2n^2+(-2m+1)n}(1-x^2q^{4n+2}).
		\end{align*}
		Combining this identity with \eqref{P3} yields
		\begin{align}\label{P3-2}
			P_3&=\frac{-xq^{-m}(-1;q)_{\infty}}{(xq^{-m};q)_{2m+1}(-xq^{m+2},-x^{-1}q^{m+2};q^2)_{\infty}}\nonumber\\
			&\quad \times\sum^{\infty}_{n=0}(xq^{2n-m+2};q)_{2m-1}x^{2n}q^{2n^2+(-2m+1)n}(1-x^2q^{4n+2}).
		\end{align}
		To simplify $Q_3$, split its defining series at $n=m$:
		\begin{align*}
			Q_3={}&(1+xq^m)(1+xq^{-m})
			\sum_{n=0}^{m-1}
			\frac{(q^{-2m+1},-xq^{-m+2};q^2)_n(-xq^m)^n}
			{(xq^{-m};q)_{2n+2}}\\
			&+(1+xq^m)(1+xq^{-m})
			\sum_{n=m}^{\infty}
			\frac{(q^{-2m+1},-xq^{-m+2};q^2)_n(-xq^m)^n}
			{(xq^{-m};q)_{2n+2}}\\
			&=(1+xq^m)(1+xq^{-m})
			\sum_{n=0}^{m-1}
			\frac{(q^{-2m+1},-xq^{-m+2};q^2)_n(-xq^m)^n}
			{(xq^{-m};q)_{2n+2}}\\
			&+\frac{(1+xq^m)(1+xq^{-m})
				(q^{-2m+1},-xq^{-m+2};q^2)_m(-xq^m)^m}
			{(xq^{-m};q)_{2m+1}}\\*
			&\quad\times\sum_{n=0}^{\infty}
			\frac{(q,-xq^{m+2};q^2)_n(-xq^m)^n}
			{(xq^{m+1};q)_{2n+1}}.
		\end{align*}
		Applying \eqref{Heineh} to the last series with
		$(a,b,c,t,h)=(-xq^{m+2},q,xq^{m+2},-xq^m,2)$ gives
		\begin{align*}
			&\sum_{n=0}^{\infty}
			\frac{(q,-xq^{m+2};q^2)_n(-xq^m)^n}
			{(xq^{m+1};q)_{2n+1}}\\*
			&=\frac{(q;q)_\infty(x^2q^{2m+2};q^2)_\infty}
			{(1-xq^{m+1})(xq^{m+2};q)_\infty(-xq^m;q^2)_\infty}
			\sum_{n=0}^{\infty}
			\frac{(-xq^m;q^2)_nq^n}
			{(q;q)_n(-xq^{m+1};q)_n}.
		\end{align*}
		Then taking
		$(a,b,c,t,h)=(ix^{1/2}q^{m/2},-ix^{1/2}q^{m/2},
		-xq^{m+1},q,1)$ in  \eqref{Heineh} yields
		\begin{align*}
			&\sum_{n=0}^{\infty}
			\frac{(-xq^m;q^2)_nq^n}{(q;q)_n(-xq^{m+1};q)_n}\\
			&=\frac{(-ix^{1/2}q^{m/2};q)_\infty
				(ix^{1/2}q^{m/2+1};q)_\infty}
			{(-xq^{m+1};q)_\infty(q;q)_\infty}
			\sum_{n=0}^{\infty}
			\frac{(-ix^{1/2}q^{m/2+1};q)_n
				(-ix^{1/2}q^{m/2})^n}
			{(ix^{1/2}q^{m/2+1};q)_n}.
		\end{align*}
		The product of the two prefactors is
		\begin{align*}
			&\frac{(x^2q^{2m+2};q^2)_\infty
				(-ix^{1/2}q^{m/2};q)_\infty
				(ix^{1/2}q^{m/2+1};q)_\infty}
			{(1-xq^{m+1})(xq^{m+2};q)_\infty
				(-xq^m;q^2)_\infty(-xq^{m+1};q)_\infty}
			=\frac{1}{1-ix^{1/2}q^{m/2}}.
		\end{align*}
		Finally, substituting
		\[
		(\alpha,\beta,\tau)=(-ix^{1/2}q^{m/2+1},
		ix^{1/2}q^{m/2+1},-ix^{1/2}q^{m/2})
		\]
		into \eqref{RF} gives
		\begin{align*}
			\sum_{n=0}^{\infty}
			\frac{(-ix^{1/2}q^{m/2+1};q)_n
				(-ix^{1/2}q^{m/2})^n}
			{(ix^{1/2}q^{m/2+1};q)_n}=\frac{1}{1+ix^{1/2}q^{m/2}}
			\sum_{n=0}^{\infty}x^nq^{n^2+mn}(1+xq^{2n+m+1}).
		\end{align*}
		Consequently, the shifted tail is
		\[
		\sum_{n=0}^{\infty}
		\frac{(q,-xq^{m+2};q^2)_n(-xq^m)^n}
		{(xq^{m+1};q)_{2n+1}}
		=\frac{1}{1+xq^m}
		\sum_{n=0}^{\infty}x^nq^{n^2+mn}(1+xq^{2n+m+1}).
		\]
		Substituting this identity into the split series gives
		\begin{align}\label{Q3-4}
			Q_3&=(1+xq^{m})(1+xq^{-m})\sum^{m-1}_{n=0}\frac{(q^{-2m+1},-xq^{-m+2};q^2)_{n}(-xq^m)^{n}}{(xq^{-m};q)_{2n+2}}\nonumber \\
			&\quad +\frac{(1+xq^{-m})(-xq^{-m+2},q^{-2m+1};q^2)_{m}(-xq^m)^{m}}{(xq^{-m};q)_{2m+1}}
			\sum_{n=0}^{\infty}x^nq^{n^2+mn}(1+xq^{2n+m+1}).
		\end{align}
		
		Multiplying \eqref{thm3-initial} by
		$q^m/\bigl((1+xq^m)(1+x^{-1}q^m)\bigr)$
		and then substituting \eqref{P3-2} and \eqref{Q3-4} gives the asserted identity.
	\end{proof}

	\begin{proof}[Proof of Theorem~\ref{partial-thm4}]
		Taking $B=q$ and $C=-q$ in Lemma~\ref{unified-identity},
		we obtain
		\begin{equation}\label{thm4-initial}
			\sum^{\infty}_{n=0}\frac{(q^2;q^4)_{n}q^{2n}}{(-xq^{m+2},-x^{-1}q^{m+2};q^2)_{n}}=P_4+Q_4,
		\end{equation}
		where
		\begin{align}
			P_4&:=-xq^{-m}\frac{(q^2;q^4)_{\infty}}{(-xq^{m+2},-x^{-1}q^{m+2};q^2)_{\infty}}
			\sum^{\infty}_{n=0}\frac{(xq^{m+1};q^2)_{n}(xq^{1-m})^n}{(-xq^{1-m};q^2)_{n+1}},\label{P4}\\
			Q_4&:=(1+xq^{m})(1+xq^{-m})\sum^{\infty}_{n=0}
			\frac{(-xq^{-m+2},-q^{-2m+2};q^2)_{n}(-xq^m)^{n}}{(x^2q^{-2m+2};q^4)_{n+1}}.\nonumber
		\end{align}
		
		Replace $q$, $\alpha$, $\beta$, and $\tau$ by $q^2$, $xq^{m+1}$,
		$-xq^{3-m}$, and $xq^{1-m}$ in \eqref{RF}, respectively.
		The product quotient is
		\[
		\frac{(xq^{m+1};q^2)_n(-xq^{m+1};q^2)_n}
		{(1+xq^{1-m})(-xq^{3-m};q^2)_n(xq^{1-m};q^2)_{n+1}}
		=\frac{(x^2q^{4n-2m+6};q^4)_{m-1}}
		{(x^2q^{2-2m};q^4)_m}.
		\]
		This is the result of splitting
		$(x^2q^{2-2m};q^4)_{n+m}$ after either $m$ or $n+1$ factors.
		It follows, for $|xq^{1-m}|<1$, that
		\begin{align*}
			&\sum^{\infty}_{n=0}\frac{(xq^{m+1};q^2)_{n}(xq^{1-m})^n}{(-xq^{1-m};q^2)_{n+1}}\\
			&=\frac{1}{(x^2q^{2-2m};q^4)_m}
			\sum^{\infty}_{n=0}(-1)^n(x^2q^{4n-2m+6};q^4)_{m-1}x^{2n}q^{2n^2+(2-2m)n}(1-x^2q^{4n+2}).
		\end{align*}
		Substituting the above identity into \eqref{P4} yields
		\begin{align}\label{P4-2}
			P_4&=-xq^{-m}\frac{(q^2;q^4)_{\infty}}{(x^2q^{2-2m};q^4)_m(-xq^{m+2},-x^{-1}q^{m+2};q^2)_{\infty}} \nonumber\\
			&\quad \times \sum^{\infty}_{n=0}(-1)^n(x^2q^{4n-2m+6};q^4)_{m-1}x^{2n}q^{2n^2+(2-2m)n}(1-x^2q^{4n+2}).
		\end{align}
		
		To simplify $Q_4$, split its defining series at $n=m$:
		\begin{align}
			Q_4={}&(1+xq^m)(1+xq^{-m})
			\sum_{n=0}^{m-1}
			\frac{(-xq^{-m+2},-q^{-2m+2};q^2)_n(-xq^m)^n}
			{(x^2q^{-2m+2};q^4)_{n+1}}\nonumber\\
			&+(1+xq^m)(1+xq^{-m})
			\sum_{n=m}^{\infty}
			\frac{(-xq^{-m+2},-q^{-2m+2};q^2)_n(-xq^m)^n}
			{(x^2q^{-2m+2};q^4)_{n+1}}\nonumber\\
			={}&(1+xq^m)(1+xq^{-m})
			\sum_{n=0}^{m-1}
			\frac{(-xq^{-m+2},-q^{-2m+2};q^2)_n(-xq^m)^n}
			{(x^2q^{-2m+2};q^4)_{n+1}}\nonumber\\
			&+\frac{(1+xq^m)(1+xq^{-m})
				(-xq^{-m+2},-q^{-2m+2};q^2)_m(-xq^m)^m}
			{(x^2q^{-2m+2};q^4)_{m+1}}\nonumber\\
			&\quad\times\sum_{n=0}^{\infty}
			\frac{(-xq^{m+2},-q^2;q^2)_n(-xq^m)^n}
			{(x^2q^{2m+6};q^4)_n}.\label{q4-1}
		\end{align}
		Taking $q\mapsto q^2$ and
		$
		(a,b,c,d,e)=(q^2,-xq^{m+2},-q^2,-xq^{m+3},xq^{m+3})
		$
		in \eqref{3phi2} gives
		\begin{align}
			\sum_{n=0}^{\infty}
			\frac{(-xq^{m+2},-q^2;q^2)_n(-xq^m)^n}
			{(x^2q^{2m+6};q^4)_n}
			=\frac{1-xq^{m+1}}{1+xq^m}
			\sum_{n=0}^{\infty}
			\frac{(q,xq^{m+1};q^2)_n(xq^{m+1})^n}
			{(-xq^{m+2};q)_{2n}}.\label{q4-2}
		\end{align}
		Applying \eqref{Heineh} to the last series with
		$(a,b,c,t,h)=(xq^{m+1},q,-xq^{m+2},xq^{m+1},2)$ yields
		\begin{align}
			\sum_{n=0}^{\infty}
			\frac{(q,xq^{m+1};q^2)_n(xq^{m+1})^n}
			{(-xq^{m+2};q)_{2n}}
			=\frac{(q;q)_\infty(x^2q^{2m+2};q^2)_\infty}
			{(-xq^{m+2};q)_\infty(xq^{m+1};q^2)_\infty}
			\sum_{n=0}^{\infty}
			\frac{(xq^{m+1};q^2)_nq^n}
			{(q,xq^{m+1};q)_n}.\label{q4-3}
		\end{align}
		Then taking $(a,b,c,t,h)=(x^{1/2}q^{(m+1)/2},-x^{1/2}q^{(m+1)/2},
		xq^{m+1},q,1)$
		in \eqref{Heineh} gives
		\begin{align}
			&\sum_{n=0}^{\infty}
			\frac{(xq^{m+1};q^2)_nq^n}{(q,xq^{m+1};q)_n}\nonumber\\
			&=\frac{(-x^{1/2}q^{(m+1)/2},x^{1/2}q^{(m+3)/2};q)_\infty}
			{(xq^{m+1},q;q)_\infty}
			\sum_{n=0}^{\infty}
			\frac{(-x^{1/2}q^{(m+1)/2};q)_n
				(-x^{1/2}q^{(m+1)/2})^n}
			{(x^{1/2}q^{(m+3)/2};q)_n}.\label{q4-4}
		\end{align}
		Finally, substituting $
		(\alpha,\beta,\tau)=(-x^{1/2}q^{(m+1)/2},
		x^{1/2}q^{(m+3)/2},-x^{1/2}q^{(m+1)/2})$
		into \eqref{RF} gives
		\begin{align}
			\sum_{n=0}^{\infty}
			\frac{(-x^{1/2}q^{(m+1)/2};q)_n
				(-x^{1/2}q^{(m+1)/2})^n}
			{(x^{1/2}q^{(m+3)/2};q)_n}
			=(1-x^{1/2}q^{(m+1)/2})
			\sum_{n=0}^{\infty}(-1)^nx^nq^{n^2+(m+1)n}.\label{q4-5}
		\end{align}
		Consequently, combining \eqref{q4-2}--\eqref{q4-5}, we derive
		\begin{align}
			\sum_{n=0}^{\infty}
			\frac{(-xq^{m+2},-q^2;q^2)_n(-xq^m)^n}
			{(x^2q^{2m+6};q^4)_n}
			=\frac{1-x^2q^{2m+2}}{1+xq^m}
			\sum_{n=0}^{\infty}(-1)^nx^nq^{n^2+(m+1)n}.
			\label{Q4-tail}
		\end{align}
		Substitution of \eqref{Q4-tail} into \eqref{q4-1} gives
		\begin{align}\label{Q4-2}
			Q_4&=(1+xq^{m})(1+xq^{-m})\sum^{m-1}_{n=0}\frac{(-xq^{-m+2},-q^{-2m+2};q^2)_{n}(-xq^m)^{n}}{(x^2q^{2-2m};q^4)_{n+1}}
			\nonumber\\
			&\quad+(1+xq^{m})(1+xq^{-m})\frac{(-xq^{-m+2};q^2)_{m-1}(-q^{-2m+2};q^2)_{m}(-xq^m)^{m}}{(x^2q^{2-2m};q^4)_{m}}
			\nonumber\\
			&\quad\times \sum^{\infty}_{n=0}(-1)^nx^nq^{n^2+(m+1)n}.
		\end{align}
		
		Multiplying \eqref{thm4-initial} by
		$q^m/\bigl((1+xq^m)(1+x^{-1}q^m)\bigr)$
		and then substituting \eqref{P4-2} and \eqref{Q4-2} gives the asserted identity.
	\end{proof}

	\begin{proof}[Proof of Theorem~\ref{partial-thm5}]
		Taking $B=-q$ and $C=-q^2$ in Lemma~\ref{unified-identity} yields
		\begin{align}\label{thm5-initial}
			\sum^{\infty}_{n=0}\frac{(-q;q)_{2n}q^{2n}}{(-xq^{m+2},-x^{-1}q^{m+2};q^2)_{n}}=P_5+Q_5,
		\end{align}
		where
		\begin{align}
			P_5&:=\frac{-xq^{-m}(-q;q)_{\infty}}{(-xq^{m+2},-x^{-1}q^{m+2};q^2)_{\infty}}
			\sum^{\infty}_{n=0}\frac{(xq^m;q^2)_{n}(xq^{-m+2})^n}{(xq^{-m+1};q^2)_{n+1}},\label{P5}\\
			Q_5&:=(1+xq^{m})(1+xq^{-m})\sum^{\infty}_{n=0}
			\frac{(-xq^{-m+2},q^{-2m+3};q^2)_{n}(-xq^m)^n}{(xq^{-m+1};q)_{2n+2}}.\nonumber
		\end{align}
		
		Replace $q$, $\alpha$, $\beta$, and $\tau$ by $q^2$, $xq^m$,
		$xq^{-m+3}$, and $xq^{-m+2}$ in \eqref{RF}, respectively.
		The remaining product quotient is
		\[
		\frac{(xq^m;q^2)_n(xq^{m+1};q^2)_n}
		{(1-xq^{-m+1})(xq^{-m+3};q^2)_n(xq^{-m+2};q^2)_{n+1}}
		=\frac{(xq^{2n-m+3};q)_{2m-3}}{(xq^{-m+1};q)_{2m-1}}.
		\]
		Here both sides follow by splitting
		$(xq^{-m+1};q)_{2n+2m-1}$ in two ways.
		Therefore, for $|xq^{2-m}|<1$,
		\begin{align*}
			\sum^{\infty}_{n=0}\frac{(xq^m;q^2)_{n}(xq^{-m+2})^n}{(xq^{-m+1};q^2)_{n+1}}&=\frac{1}{(xq^{-m+1};q)_{2m-1}}\\
			&\quad\times \sum^{\infty}_{n=0}(xq^{2n-m+3};q)_{2m-3}x^{2n}q^{2n^2+(3-2m)n}(1-x^2q^{4n+2}).
		\end{align*}
		Combining this identity with \eqref{P5} yields
		\begin{align}\label{P5-2}
			P_5&=-\frac{xq^{-m}(-q;q)_{\infty}}{(xq^{-m+1};q)_{2m-1}(-xq^{m+2},-x^{-1}q^{m+2};q^2)_{\infty}} \nonumber \\
			&\quad \times \sum^{\infty}_{n=0}(xq^{2n-m+3};q)_{2m-3}x^{2n}q^{2n^2+(3-2m)n}(1-x^2q^{4n+2}).
		\end{align}
		
		To simplify $Q_5$, split its defining series at $n=m-1$:
		\begin{align}
			Q_5={}&(1+xq^m)(1+xq^{-m})
			\sum_{n=0}^{m-2}
			\frac{(-xq^{-m+2},q^{-2m+3};q^2)_n(-xq^m)^n}
			{(xq^{-m+1};q)_{2n+2}}\nonumber\\
			&+(1+xq^m)(1+xq^{-m})
			\sum_{n=m-1}^{\infty}
			\frac{(-xq^{-m+2},q^{-2m+3};q^2)_n(-xq^m)^n}
			{(xq^{-m+1};q)_{2n+2}}\nonumber\\
			={}&(1+xq^m)(1+xq^{-m})
			\sum_{n=0}^{m-2}
			\frac{(-xq^{-m+2},q^{-2m+3};q^2)_n(-xq^m)^n}
			{(xq^{-m+1};q)_{2n+2}}\nonumber\\
			&+\frac{(1+xq^m)(1+xq^{-m})
				(-xq^{-m+2},q^{-2m+3};q^2)_{m-1}(-xq^m)^{m-1}}
			{(xq^{-m+1};q)_{2m}}\nonumber\\
			&\quad\times\sum_{n=0}^{\infty}
			\frac{(-xq^m,q;q^2)_n(-xq^m)^n}
			{(xq^{m+1};q)_{2n}}.\label{q5-1}
		\end{align}
		Applying \eqref{Heineh} to the last series with
		$(a,b,c,t,h)=(-xq^m,q,xq^{m+1},-xq^m,2)$ gives
		\begin{align}
			\sum_{n=0}^{\infty}
			\frac{(-xq^m,q;q^2)_n(-xq^m)^n}
			{(xq^{m+1};q)_{2n}}=\frac{(1-xq^m)(-xq^m,q;q)_\infty}
			{(-xq^m;q^2)_\infty}
			\sum_{n=0}^{\infty}
			\frac{(-xq^m;q^2)_nq^n}{(-xq^m,q;q)_n}.\label{q5-2}
		\end{align}
		Then taking $(a,b,c,t,h)=(ix^{1/2}q^{m/2},-ix^{1/2}q^{m/2},
		-xq^m,q,1)$
		in \eqref{Heineh} yields
		\begin{align}
			&\sum_{n=0}^{\infty}
			\frac{(-xq^m;q^2)_nq^n}{(-xq^m,q;q)_n}\nonumber\\*
			&=\frac{(ix^{1/2}q^{m/2+1},-ix^{1/2}q^{m/2};q)_\infty}
			{(-xq^m,q;q)_\infty}
			\sum_{n=0}^{\infty}
			\frac{(-ix^{1/2}q^{m/2};q)_n
				(-ix^{1/2}q^{m/2})^n}
			{(ix^{1/2}q^{m/2+1};q)_n}.\label{q5-3}
		\end{align}
		Finally, substituting $(\alpha,\beta,\tau)=(-ix^{1/2}q^{m/2},
		ix^{1/2}q^{m/2+1},-ix^{1/2}q^{m/2})$
		into \eqref{RF} gives
		\begin{align}
			\sum_{n=0}^{\infty}
			\frac{(-ix^{1/2}q^{m/2};q)_n
				(-ix^{1/2}q^{m/2})^n}
			{(ix^{1/2}q^{m/2+1};q)_n}
			=(1-ix^{1/2}q^{m/2})
			\sum_{n=0}^{\infty}x^nq^{n^2+mn}.\label{q5-4}
		\end{align}
		Consequently, combining \eqref{q5-2}--\eqref{q5-4}, we derive
		\begin{align}
			\sum_{n=0}^{\infty}
			\frac{(-xq^m,q;q^2)_n(-xq^m)^n}
			{(xq^{m+1};q)_{2n}}
			=(1-xq^m)\sum_{n=0}^{\infty}x^nq^{n^2+mn}.\label{Q5-tail}
		\end{align}
		Substitution of \eqref{Q5-tail} into \eqref{q5-1} gives
		\begin{align}\label{Q5-final}
			Q_5&=(1+xq^{m})(1+xq^{-m})\sum^{m-2}_{n=0}\frac{(-xq^{-m+2},q^{-2m+3};q^2)_n(-xq^m)^n}{(xq^{-m+1};q)_{2n+2}}\nonumber \\*
			&\quad+\frac{(1+xq^{m})(1+xq^{-m})(-xq^{-m+2},q^{-2m+3};q^2)_{m-1}(-xq^m)^{m-1}}{(xq^{-m+1};q)_{2m-1}}\sum^{\infty}_{n=0}x^nq^{n^2+mn}.
		\end{align}
		
		Multiplying \eqref{thm5-initial} by
		$q^m/\bigl((1+xq^m)(1+x^{-1}q^m)\bigr)$
		and then substituting \eqref{P5-2} and \eqref{Q5-final} gives the asserted identity.
	\end{proof}
	
	\section{Special cases}\label{sec-special-cases}
	
	In this section, we show that the six Ramanujan identities displayed in the introduction are special cases of the main theorems.
	
	For completeness, we give the details of the first specialization.  When
	$m=0$, Theorem~\ref{partial-thm1} becomes
	\begin{align}
		\sum_{n=0}^{\infty}
		\frac{q^{2n}}{(-xq^2,-x^{-1}q^2;q^2)_n}
		&=(1+x)\sum_{n=0}^{\infty}
		x^{3n}q^{3n^2+n}(1-x^2q^{4n+2})
		\nonumber\\
		&\quad+\frac{1}{(-xq^2,-x^{-1}q^2;q^2)_\infty}
		\sum_{n=0}^{\infty}(-1)^{n+1}x^{2n+1}q^{n^2+n}.
		\label{special-p1}
	\end{align}
	Replacing $x$ by $-x$ in \eqref{special-p1}, and then replacing $q$ by
	$q^{1/2}$, gives \eqref{p-1}:
	\begin{align*}
		\sum_{n=0}^{\infty}\frac{q^n}{(xq,x^{-1}q;q)_n}
		&=(1-x)\sum_{n=0}^{\infty}(-1)^nx^{3n}
		q^{n(3n+1)/2}(1-x^2q^{2n+1})\\*
		&\quad+\frac{x}{(xq,x^{-1}q;q)_\infty}
		\sum_{n=0}^{\infty}(-1)^nx^{2n}q^{\binom{n+1}{2}}.
	\end{align*}
	
	Next, Theorem~\ref{partial-thm1} with $m=1$ gives
	\begin{align*}
		&\sum^{\infty}_{n=0}\frac{q^{2n+1}}{(-xq^3,-x^{-1}q^3;q^2)_{n}}-q(1+xq)(1+xq^{-1})\\
		&=-q^2(1+xq)(1+xq^{-1})\sum_{n=0}^{\infty}x^{3n+1}q^{n(3n+4)}(1-x^2q^{4n+4})\nonumber\\
		&\quad -\frac{1}{(-xq^3,-x^{-1}q^3;q^2)_{\infty}}\sum_{n=0}^{\infty}(-1)^nx^{2n+1}q^{n(n+1)}.
	\end{align*}
	Dividing both sides of the above identity by$(1+xq)(1+x^{-1}q)$,
	we have
	\begin{align*}
		&\sum^{\infty}_{n=0}\frac{q^{2n+1}}{(-xq,-x^{-1}q;q^2)_{n+1}}\\
		&=x-xq\sum_{n=0}^{\infty}x^{3n+1}q^{n(3n+4)}(1-x^2q^{4n+4})-\frac{1}{(-xq,-x^{-1}q;q^2)_{\infty}}\sum_{n=0}^{\infty}(-1)^nx^{2n+1}q^{n(n+1)}\\
		&=x-\sum_{n=0}^{\infty}x^{3n+2}q^{n(3n+4)+1}+\sum_{n=0}^{\infty}x^{3n+4}q^{n(3n+8)+5}\nonumber\\
		&\quad -\frac{1}{(-xq,-x^{-1}q;q^2)_{\infty}}\sum_{n=0}^{\infty}(-1)^nx^{2n+1}q^{n(n+1)}\\
		&=x-\sum_{n=0}^{\infty}x^{3n+2}q^{n(3n+4)+1}+\sum_{n=1}^{\infty}x^{3n+1}q^{n(3n+2)}\nonumber\\
		&\quad -\frac{1}{(-xq,-x^{-1}q;q^2)_{\infty}}\sum_{n=0}^{\infty}(-1)^nx^{2n+1}q^{n(n+1)}
	\end{align*}
	which is \eqref{p-2} after combining the three partial theta sums.
	
	Theorem~\ref{partial-thm2} with $m=0$ gives \eqref{p-6}, while its case $m=1$ gives \eqref{p-3}. Setting $m=0$ and then replacing $q$ by $q^{1/2}$ in Theorem~\ref{partial-thm4} gives \eqref{p-4}. Finally, setting $m=1$ and replacing $x$ by $-x$ in Theorem~\ref{partial-thm5} gives \eqref{p-5}.

	\section{A conjugate Bailey pair}\label{sec-bailey}

	\begingroup
	\interlinepenalty=10000
	Following \cite[(1.2)--(1.4)]{Lovejoy-2012}, we use the following
	specialization of the standard definitions. For fixed $m\geq1$, two
	sequences $(\alpha_n,\beta_n)$ form a Bailey pair relative to
	$a^2q^{2m}$, with base $q^2$, if
	\[
		\beta_n=\sum_{r=0}^{n}
		\frac{\alpha_r}
		{(q^2;q^2)_{n-r}(a^2q^{2m+2};q^2)_{n+r}}.
	\]
	\par\endgroup
	Two sequences $(\delta_n,\gamma_n)$ form a conjugate Bailey pair relative
	to $a^2q^{2m}$, with base $q^2$, if
	\[
		\gamma_r=\sum_{n=r}^{\infty}
		\frac{\delta_n}
		{(q^2;q^2)_{n-r}(a^2q^{2m+2};q^2)_{n+r}}.
	\]
	Under absolute convergence, the Bailey transform then gives
	\begin{equation}\label{bailey-transform}
		\sum_{n=0}^{\infty}\beta_n\delta_n
		=\sum_{n=0}^{\infty}\alpha_n\gamma_n.
	\end{equation}

	We first extract the residual identity needed below.

	\begin{thm}\label{residual-identity}
		For $m\geq1$,
		\begin{align}
			&\sum_{n=0}^{\infty}
			\frac{(aq;q)_{2n}q^{2n}}
			{(a^2q^{2m+2},q^2;q^2)_n}\nonumber\\
			&\quad=\frac{(aq^{2m};q)_{\infty}}
			{(a^2q^{2m+2},q^2;q^2)_{\infty}}
			\sum_{n=0}^{\infty}(aq^{2n+3};q)_{2m-3}a^{2n}
			q^{2n^2+3n}(1-a^2q^{2m+4n+2}).
			\label{m-residual}
		\end{align}
		The identity holds whenever the displayed denominators do not vanish.
	\end{thm}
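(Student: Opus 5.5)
The plan is to follow Lovejoy's residue approach: find a pole that the left side of Theorem~\ref{partial-thm5} (more precisely, of the identity \eqref{thm5-initial} obtained from Lemma~\ref{unified-identity} with $(B,C)=(-q,-q^2)$) shares with the infinite-product term $P_5$, and compare residues there. Since $(-q;q)_{2n}=(-q,-q^2;q^2)_n$, the factor $(aq;q)_{2n}=(aq,aq^2;q^2)_n$ in \eqref{m-residual} suggests running the same argument with $(B,C)=(aq,aq^2)$. I would first treat a discrete set of values of $a$ and then remove the restriction by analytic continuation in $a$.

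\emph{Locating the pole.} In the denominator $(-xq^{m+2},-x^{-1}q^{m+2};q^2)_n$, the factor $1+x^{-1}q^{m+2k}$ vanishes at $x=-q^{m+2k}$. I would multiply the identity by this factor and let $x\to-q^{m+2k}$. On the left, only terms with $n\ge k$ survive. After the shift $n\mapsto n+k$, the remaining denominator becomes $(q^2;q^2)_n(q^{2m+4k+2};q^2)_n$ times an explicit finite product. This matches $(a^2q^{2m+2},q^2;q^2)_n$ with $a=q^{k}$ (up to renaming $m$), and at the same time the numerator factor $(B,C;q^2)_{n+k}$ becomes $(aq;q)_{2n}$ times a constant. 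So I would set $a=q^{k}$ and take $B$, $C$ with the shift already absorbed.

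\emph{Behaviour of the right side.} On the right, the finite correction sum $\mathcal V_m$ (equivalently $Q_5$ together with its finite part) stays analytic at $x=-q^{m+2k}$ for generic parameters. Its denominators $(-Bxq^{-m},-Cxq^{-m};q^2)_{n+1}$ and $(-xq^{-m};q^2)_{n+1}$ do not vanish there, and $|t|=|q^{2m+2k}|<1$ ensures convergence. Hence $\mathcal V_m$ contributes zero residue. Only $\mathcal U_m$, i.e.\ $P_5$ in its Rogers--Fine form \eqref{P5-2}, has a pole, coming from $(-x^{-1}q^{m+2};q^2)_\infty$. Its residue is the infinite product with one factor removed, times the partial theta series evaluated at $x=-q^{m+2k}$. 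Using \eqref{P5-2}, the quotient $(xq^{2n-m+3};q)_{2m-3}/(xq^{-m+1};q)_{2m-1}$ becomes $(aq^{2n+3};q)_{2m-3}$ times a constant. The factor $x^{2n}q^{2n^2+(3-2m)n}(1-x^2q^{4n+2})$ turns into $a^{2n}q^{2n^2+3n}(1-a^2q^{2m+4n+2})$.

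\emph{Main obstacle and conclusion.} The delicate step is the bookkeeping of prefactors. One must check that all constant factors left over from the shift $n\mapsto n+k$ on the left side and from the residue of $(-x^{-1}q^{m+2};q^2)_\infty$ combine into exactly $(aq^{2m};q)_\infty/(a^2q^{2m+2},q^2;q^2)_\infty$. Here $(-q;q)_\infty$ must also convert into $(aq^{2m};q)_\infty$ once $a=q^k$. If direct matching turns out awkward, I would instead use Lemma~\ref{unified-identity} with $(B,C)=(aq,aq^2)$ and general $a$, which avoids the specialization $a=q^k$ altogether. I would then redo the Rogers--Fine step of the proof of Theorem~\ref{partial-thm5} with $\alpha,\beta,\tau$ carrying $a$. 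Once \eqref{m-residual} holds for $a=q^k$, $k\ge0$, both sides are analytic in $a$ near $0$ wherever the denominators do not vanish. The points $q^k$ accumulate at $0$, so the identity theorem extends \eqref{m-residual} to all admissible $a$.
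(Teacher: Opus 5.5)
Your strategy is the paper's: take the residue of Theorem~\ref{partial-thm5} (equivalently of \eqref{thm5-initial}) at $x=-q^{m+2k}$. Only the Rogers--Fine term \eqref{P5-2} is singular there. Read off \eqref{m-residual} on a sequence of $a$-values tending to $0$, and conclude by the identity theorem. However, the value of $a$ you name is wrong, and with it the matching step you flag as delicate cannot be completed.

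After the shift $n\mapsto n+k$ the left-hand denominator is $(q^{2m+4k+2},q^2;q^2)_n$, which forces $a^2=q^{4k}$, not $a=q^k$. The numerator $(-q;q)_{2n+2k}=(-q;q)_{2k}(-q^{2k+1};q)_{2n}$ then fixes the sign: $aq=-q^{2k+1}$, so $a=-q^{2k}$. No renaming of $m$ can rescue $a=q^k$, because the numerator alone already determines $a$.

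Your other claims also hold only for $a=-q^{2k}$. First, $x^{2n}q^{2n^2+(3-2m)n}$ becomes $a^{2n}q^{2n^2+3n}$ only if $a^2=q^{4k}$. Second, $(-q;q)_\infty$ turns into $(aq^{2m};q)_\infty$ only through $(-q;q)_\infty/\bigl((-q;q)_{2k}(-q^{2k+1};q)_{2m-1}\bigr)=(-q^{2k+2m};q)_\infty$. With $a=q^k$ you would need factors $1-q^j$ to emerge from a product of factors $1+q^j$.

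With $a=-q^{2k}$ your argument coincides with the paper's. The paper writes $x=zq^m$ and lets $z\to-q^{2N}$, working with $D_{m,n+1}$ so that $N=0$ is also allowed. The points $-q^{2k}$ still accumulate at $0$, so your identity-theorem step is unaffected. You should add, as the paper does, two facts: after multiplying by the vanishing factor the series converge locally uniformly near the point, so the limit may be taken termwise; and the finite correction terms are regular there.

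Your fallback with $(B,C)=(aq,aq^2)$ and free $a$ does not avoid the specialization. The residue at $x=-q^{m+2k}$ gives $\sum_n(aq^{2k+1};q)_{2n}q^{2n}/(q^{2m+4k+2},q^2;q^2)_n$. Its denominator parameter is pinned independently of $a$, so it is an instance of \eqref{m-residual} only when $a^2=1$. Reaching general $a$ that way would need a second analytic continuation in the denominator parameter $q^{4k}$, which you have not set up.
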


	\begin{proof}
		In Theorem~\ref{partial-thm5}, replace $x$ by $zq^m$. Fix $N\geq0$,
		multiply the resulting identity by $1+z^{-1}q^{2N}$, and let
		$z\to -q^{2N}$. After the pole is removed, the series converge locally
		uniformly near this point, so the limit may be taken termwise.
		The finite correction terms are regular at this point
		and hence tend to zero. On the left-hand side, the terms with $n<N$
		also tend to zero. In the remaining terms write $n=N+k$. The factor
		that produces the pole satisfies
		\begin{align}
			\lim_{z\to-q^{2N}}
			\frac{(-z^{-1};q^2)_{N+k+1}}
			{1+z^{-1}q^{2N}}=(q^{-2N};q^2)_N(q^2;q^2)_k
			=(-1)^Nq^{-N(N+1)}(q^2;q^2)_N(q^2;q^2)_k.
			\label{removed-pole}
		\end{align}
		Consequently, the limit of the left-hand side is
		\begin{align}
			C_N\sum_{k=0}^{\infty}
			\frac{(-q^{2N+1};q)_{2k}q^{2k}}
			{(q^{4N+2m+2},q^2;q^2)_k},
			\label{residue-left}
		\end{align}
		where
		\[
			C_N=\frac{(-1)^Nq^{N(N+1)}(-q;q)_{2N}q^{2N+m}}
			{(q^2;q^2)_N(q^{2m+2N};q^2)_{N+1}}.
		\]
		Only the first infinite-series term on the right-hand side of
		Theorem~\ref{partial-thm5} has a pole. Using \eqref{removed-pole} and
		\[
			\frac{(-q;q)_\infty}
			{(-q;q)_{2N}(-q^{2N+1};q)_{2m-1}}
			=(-q^{2N+2m};q)_\infty,
		\]
		its limit simplifies to
		\begin{align}
			C_N\frac{(-q^{2N+2m};q)_\infty}
			{(q^{4N+2m+2},q^2;q^2)_\infty}\sum_{n=0}^{\infty}
			(-q^{2N+2n+3};q)_{2m-3}q^{4Nn}
			q^{2n^2+3n}(1-q^{4N+2m+4n+2}).
			\label{residue-right}
		\end{align}
		After canceling $C_N$, equations \eqref{residue-left} and
		\eqref{residue-right} give \eqref{m-residual} at $a=-q^{2N}$.
		Both sides of \eqref{m-residual} are analytic in a neighborhood of
		$a=0$. Since the points $-q^{2N}$ accumulate at zero, the identity
		theorem proves \eqref{m-residual} near zero, and meromorphic
		continuation proves it wherever the displayed expressions are defined.
	\end{proof}

	\begin{cor}\label{conjugate-bailey-pair}
		Let $m\geq1$. Then
		\[
			\delta_n=(aq;q)_{2n}q^{2n}
		\]
		and
		\begin{align}
			\gamma_r
			&=\frac{q^{2r}(aq;q)_{2r}(aq^{2m};q)_\infty}
			{(aq^{2m};q)_{2r}(a^2q^{2m+2},q^2;q^2)_\infty}
			\nonumber\\
			&\quad\times\sum_{k=0}^{\infty}
			(aq^{2r+2k+3};q)_{2m-3}a^{2k}
			q^{2k^2+(4r+3)k}
			(1-a^2q^{4r+2m+4k+2})
			\label{gamma-pair}
		\end{align}
		form a conjugate Bailey pair relative to $a^2q^{2m}$, with base $q^2$.
	\end{cor}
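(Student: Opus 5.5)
We have to verify that, for every $r\ge0$,
\[
\gamma_r=\sum_{n=r}^{\infty}\frac{\delta_n}{(q^2;q^2)_{n-r}(a^2q^{2m+2};q^2)_{n+r}} .
\]
The plan is to shift the index and reduce the sum to Theorem~\ref{residual-identity} with a rescaled parameter.

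First, put $n=r+k$. Split the numerator as
\[
(aq;q)_{2r+2k}=(aq;q)_{2r}\,(aq^{2r+1};q)_{2k},
\]
and the denominator as
\[
(a^2q^{2m+2};q^2)_{2r+k}=(a^2q^{2m+2};q^2)_{2r}\,(a^2q^{4r+2m+2};q^2)_k .
\]
The sum defining the conjugate partner then becomes
\[
\frac{q^{2r}(aq;q)_{2r}}{(a^2q^{2m+2};q^2)_{2r}}\sum_{k=0}^{\infty}\frac{(aq^{2r+1};q)_{2k}\,q^{2k}}{(a^2q^{4r+2m+2},q^2;q^2)_k}.
\]
The inner sum is exactly the left-hand side of \eqref{m-residual} with $a$ replaced by $b:=aq^{2r}$. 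Indeed, $bq=aq^{2r+1}$ and $b^2q^{2m+2}=a^2q^{4r+2m+2}$.

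Next, apply Theorem~\ref{residual-identity} with $a\mapsto aq^{2r}$. The inner sum equals
\[
\frac{(aq^{2r+2m};q)_\infty}{(a^2q^{4r+2m+2},q^2;q^2)_\infty}\sum_{k\ge0}(aq^{2r+2k+3};q)_{2m-3}\,a^{2k}q^{4rk}q^{2k^2+3k}\bigl(1-a^2q^{4r+2m+4k+2}\bigr).
\]
The summand already matches the one in \eqref{gamma-pair}, since $q^{4rk}q^{2k^2+3k}=q^{2k^2+(4r+3)k}$.

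Finally, the products must be reconciled. We need
\[
\frac{(aq^{2r+2m};q)_\infty}{(a^2q^{2m+2};q^2)_{2r}\,(a^2q^{4r+2m+2};q^2)_\infty}=\frac{(aq^{2m};q)_\infty}{(aq^{2m};q)_{2r}\,(a^2q^{2m+2};q^2)_\infty}.
\]
This follows from two elementary splittings:
\[
(a^2q^{2m+2};q^2)_{2r}\,(a^2q^{4r+2m+2};q^2)_\infty=(a^2q^{2m+2};q^2)_\infty,\qquad
(aq^{2m};q)_\infty=(aq^{2m};q)_{2r}\,(aq^{2m+2r};q)_\infty .
\]
The only care needed is with the exponent bookkeeping in these splittings, together with noting that the shifted parameter keeps the displayed denominators nonvanishing. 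Convergence of the sum over $n\ge r$ is geometric in $q^2$, so the reindexing is harmless.

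I expect no real obstacle: the main point is recognising that the tail sum is a $q^{2r}$-shift of the residual identity.
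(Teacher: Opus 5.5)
Your proposal is correct and follows the paper's own proof: you reindex with $n=r+k$, apply Theorem~\ref{residual-identity} with $a\mapsto aq^{2r}$, and split the finite and infinite products at $2r$. Your product bookkeeping, $(a^2q^{2m+2};q^2)_{2r}(a^2q^{4r+2m+2};q^2)_\infty=(a^2q^{2m+2};q^2)_\infty$ and $(aq^{2m};q)_\infty=(aq^{2m};q)_{2r}(aq^{2m+2r};q)_\infty$, checks out and gives exactly \eqref{gamma-pair}.
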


	\begin{proof}
		By the definition of a conjugate Bailey pair and the change of index
		$n=r+k$,
		\begin{align*}
			\gamma_r
			&=\sum_{n=r}^{\infty}
			\frac{(aq;q)_{2n}q^{2n}}
			{(a^2q^{2m+2};q^2)_{n+r}(q^2;q^2)_{n-r}}\\
			&=\frac{q^{2r}(aq;q)_{2r}}
			{(a^2q^{2m+2};q^2)_{2r}}
			\sum_{k=0}^{\infty}
			\frac{(aq^{2r+1};q)_{2k}q^{2k}}
			{(a^2q^{4r+2m+2},q^2;q^2)_k}.
		\end{align*}
		Apply Theorem~\ref{residual-identity} to the last series with $a$
		replaced by $aq^{2r}$. Splitting the finite and infinite products at
		$2r$ gives exactly \eqref{gamma-pair}.
	\end{proof}

	Thus, if $(\alpha_n,\beta_n)$ is any Bailey pair relative to
	$a^2q^{2m}$ with base $q^2$, equation \eqref{bailey-transform} gives,
	provided the interchange of sums is justified,
	\begin{equation*}
		\sum_{n=0}^{\infty}(aq;q)_{2n}q^{2n}\beta_n
		=\sum_{n=0}^{\infty}\alpha_n\gamma_n,
	\end{equation*}
	where $\gamma_n$ is given by \eqref{gamma-pair}.

	\begin{rem}
		When $m=1$,
		\[
			(aq^{2n+3};q)_{-1}(1-a^2q^{4n+4})
			=1+aq^{2n+2}.
		\]
		Splitting a single series into its even and odd parts therefore gives
		\[
			\sum_{n=0}^{\infty}a^{2n}q^{2n^2+3n}
			(1+aq^{2n+2})
			=\sum_{n=0}^{\infty}a^nq^{\binom{n+1}{2}+n}.
		\]
		Hence \eqref{m-residual} reduces to the residual identity
		\cite[Eq.~(2.7)]{Lovejoy-2012}; Corollary~\ref{conjugate-bailey-pair}
		is its $m$-parameter extension at the level of conjugate Bailey pairs.
	\end{rem}


\end{document}